\newcommand{\rnnn}{\mathbb R^n}
\newcommand{\sn}{ {\mathbb{S}^{n-1}}}
\newcommand{\R}{\mathbb R}
\newcommand{\psum}{{+_{\negthinspace\kern-2pt p}}\,}
\newcommand{\qsum}[1]{{+_{\negthinspace\kern-2pt #1}}\,}
\newcommand{\dpsum}{{\tilde+_{\negthinspace\kern-1pt p}}\,}
\newcommand{\dqsum}[1]{{\tilde+_{\negthinspace\kern-1pt #1}}\,}
\newcommand{\lsub}[1]{\hskip -1.5pt\lower.5ex\hbox{$_{#1}$}}
\numberwithin{equation}{section}
\newtheorem{theo}{Theorem}[section]
\newtheorem{lem}[theo]{Lemma}
 \theoremstyle{definition}
\begin{document}

\title{A flow method to the Orlicz-Aleksandrov problem}

\author[J. Hu]{Jinrong Hu}
\address{School of Mathematics, Hunan University, Changsha, 410082, Hunan Province, China}
\email{hujinrong@hnu.edu.cn}
\author[J. Liu]{Jiaqian Liu}
\address{School of Mathematics, Hunan University, Changsha, 410082, Hunan Province, China}
\email{liujiaqian@hnu.edu.cn}
\author[D. Ma]{Di Ma}
\address{School of Mathematics, Hunan University, Changsha, 410082, Hunan Province, China}
\email{madi@hnu.edu.cn}

\begin{abstract}

In this paper, we obtain an existence result of smooth solutions to the Orlicz-Aleksandrov problem from the perspective of geometric flow. Furthermore, a special uniqueness result of solutions to this problem shall be discussed.

\end{abstract}
\keywords{Orlicz Aleksandrov problem, {M}onge-{A}mp\`ere equation, Geometric flow}
\subjclass[2010]{35k55, 52A20, 58J35 }
\thanks{The research is supported, in part, by the National Science Foundation of China (12171144)}
\maketitle

\baselineskip18pt

\parskip3pt

\section{Introduction}

The Brunn-Minkowski theory,  the core content of convex geometry, was developed by Minkowski, Aleksandrov, Fenchel, and many others. The Minkowski problem, involving surface area measure is a classical problem in the Brunn-Minkowski theory, which was proposed and solved by Minkowski himself \cite{M897,M903}. As an important counterpart of the Minkowski problem, the classical Aleksandrov problem was introduced by Aleksandrov in \cite{A42}. It is a characterization problem for the Aleksandrov integral curvature, which was originally solved by Aleksandrov via a topological argument, see \cite{A42}. Different from Aleksandrov's method, Oliker \cite{O07} gave a new proof based on a variational technique inspired by optimal transportation theory, and recently Bertrand \cite{Be16} also provided an alternative approach to treat this problem using optimal mass transport.

The $L_{p}$ Brunn-Minkowski theory, as an analogue of the Brunn-Minkowski theory,  which was initiated by Firey \cite{F67} in the 1950s, but this theory gained a new life when Lutwak enriched it with the concept of the $L_{p}$ surface area measure in \cite{L93} in the early 1990s. The $L_{p}$ Minkowski problem involving the $L_{p}$ surface area measure is a fundamental problem in this theory, which has been the breeding ground equipped with many different results in a series of papers \cite{B17,B19,CW06,HY15,L04,L93,Zhu15}. With the development of the $L_{p}$ Brunn-Minkowski theory, as a parallelism of the $L_{p}$ Minkowski problem, Huang-LYZ\cite{HLYZ18} introduced the $L_{p}$ Aleksandrov problem for the $L_{p}$ Aleksandrov integral curvature, and completely established the existence result for $p>0$. At the same time, Zhao\cite{Zh19} presented a solution to the $L_{p}$ Aleksandrov problem for origin-symmetric polytopes in the case of $-1<p<0$.

The Orlicz-Brunn-Minkowski theory, as a new generation of the $L_{p}$ Brunn-Minkowski theory, was launched by Lutwak, Yang and Zhang\cite{LYZ10,LYZ100}. Based on their work, Haberl-LYZ\cite{HLYZ10} first proposed and solved the Orlicz Minkowski problem. Since then, the relevant Orlicz-Minkowski type problems have been widely studied, see for instance \cite{Huang12,Hong18,G19,Wu19}. Naturally, apart from the Orlicz Minkowski problem, the corresponding measure characterization problem for the Aleksandrov problem has been sought for in the Orlicz-Brunn-Minkowski theory. In the recent excellent work, Feng-He \cite{Fe21} defined the Orlicz integral curvature and posed the Orlicz Aleksandrov problem finding the conditions of a given finite Borel measure $\mu$ on $\sn$ as a multiple of the Orlicz integral curvature of a convex body in $\rnnn$, they also gave the necessary and sufficient conditions for the existence of even solutions to the Orlicz Aleksandrov problem via a variational argument.

It is worth mentioning that for the special case in which the given measure $\mu$ has a positive density $f$ on $\sn$, the existence of the solution of the Orlicz Aleksandrov problem amounts to solving the following {M}onge-{A}mp\`ere type equation,
\begin{equation}\label{Or-Mong}
\gamma \varphi(1/h)h(|\nabla h|^{2}+h^{2})^{-\frac{n}{2}}det(\nabla^{2}h+hI)=f
\end{equation}
for some positive constant $\gamma>0$. It is clear to see that, in the case $\varphi(s)=s^{p}$ with $p\in \R$, the Orlicz Aleksandrov problem reduces to the $L_{p}$ Aleksandrov problem, and the constant $\gamma$ can be merged to $h$ due to the homogeneousness of the $L_{p}$ Aleksandrov integral curvature.  In particular,  when equation \eqref{Or-Mong} corresponds to the regularity of solutions to the Aleksandrov problem, this topic has been intensively studied. For example, Oliker\cite{O83} and Pogorelov\cite{P73} independently investigated it in the circumstance that $f$ is a smooth positive function, and Guan-Li\cite{G97} further dealt with certain degenerate {M}onge-{A}mp\`ere type equations in the setting that $f$ is smooth but only nonnegative. Note that, for general $\varphi$,  the uniqueness of solutions to equation \eqref{Or-Mong} is open.

The main purpose of this paper concerns the existence of the solution of the Orlicz Aleksandrov problem associated with the solvability of equation \eqref{Or-Mong} from the perspective of geometric flow. The geometric flow generated by Gauss curvature was first studied by Firey\cite{F74}. From then on, there appeared various Gauss curvature flows, see for instance, Andrews-Guan\cite{An16}, Chen-Huang-Zhao\cite{CHZ19}, Chou-Wang\cite{CW00}, Li-Sheng-Wang\cite{LSW20,LSW200} and their references therein. In the spirit of their work, we consider a family of strictly convex hypersurfaces $\partial \Omega_{t}$ parameterized by smooth map $X(\cdot,t):\sn\rightarrow \rnnn$ satisfying the following flow equation,
\begin{equation}\label{mainflow}
\left\{
\begin{array}{lr}
\frac{\partial X(x,t)}{\partial t}=-\frac{f(\nu)|X|^{n}\kappa \nu \eta(t)}{\varphi(1/(X\cdot \nu)) }+X(x,t);  \\
X(x,0)=X_{0}(x),
\end{array}\right.
\end{equation}
where $\kappa$ is the Gauss curvature of the hypersurface $\partial \Omega_{t}$ at $X(\cdot,t)$, $\nu=x$ is the unit outer normal vector of $\partial\Omega_{t}$ at $X(\cdot,t)$, and $\eta(t)$ is defined by
\begin{equation*}
\eta(t)=\frac{\int_{\sn}du}{\int_{\sn}\frac{f(x)}{\varphi(1/(X\cdot \nu))}dx}.
\end{equation*}

With the aid of the flow equation \eqref{mainflow}, we are devoted to solving equation \eqref{Or-Mong}. Before elaborating the main results of this paper, we do some preparation  with setting
\[
\Phi_{0}=\left\{\Re:\lim_{s\rightarrow 0}\Re(s)=0,\lim_{s\rightarrow \infty}\Re(s)=\infty\right\},
\]
where $\Re:(0,\infty)\rightarrow (0,\infty)$ is continuously differentiable and strictly increasing function. On the other hand, we set
\[
\Psi_{0}=\left\{\Re:\lim_{s\rightarrow 0}\Re(s)=\infty,\lim_{s\rightarrow \infty}\Re(s)=0\right\},
\]
where $\Re:(0,\infty)\rightarrow (0,\infty)$ is continuously differentiable and strictly decreasing function.

We are now in a position to state that the main aim of current work is to obtain the long time existence and convergence results of the flow \eqref{mainflow}. It is shown in the following theorem.
\begin{theo}\label{main*}
Let $\Omega_{0}$ be a smooth, origin symmetric and strictly convex body in $\rnnn$. Suppose $f:\sn\rightarrow (0,\infty)$ is smooth and even, and $\varphi:(0,\infty)\rightarrow (0,\infty)$ is smooth. If there is
\begin{enumerate}
\item[(i)]
$\Re \in \Phi_{0}$ satisfying
$
\Re(s)=\int_{0}^{s}\frac{1}{t \varphi(t)}dt \ with \ \Re^{'}\neq 0 \ on \ (0,\infty)$, for some $\hat{C}>0$, we have
\[
\int_{\sn}f(x)\Re (1/|x\cdot \theta|)dx\leq \hat{C}, \ \forall \theta\in \sn,
\]
and choosing a $\Omega_{0}$ such that
\[
\int_{\sn}f(x)\Re(1/h(\Omega_{0},x))dx> \hat{C};
\]
or
\item[(ii)]
 $\Re \in \Psi_{0}$ satisfying
$
\Re(s)=\int_{s}^{\infty}\frac{1}{t \varphi(t)}dt  \ with \ \Re^{'}\neq 0 \ on \ (0,\infty),
$
\end{enumerate}
 then there exists a smooth, origin symmetric, strictly convex solution $\Omega_{t}$ to flow equation \eqref{mainflow} for all time $t>0$, and it converges along a sequence in $C^{\infty}$ to a smooth, origin symmetric, strictly convex solution to equation \eqref{Or-Mong} for some positive constant $\gamma$.

\end{theo}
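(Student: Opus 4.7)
The plan is to first reduce the flow \eqref{mainflow} to a scalar parabolic equation on $\sn$ for the support function $h(x,t)=X(x,t)\cdot x$. Using $|X|^{2}=h^{2}+|\nabla h|^{2}$ and $\kappa=1/\det(\nabla^{2}h+hI)$ and taking the inner product of \eqref{mainflow} with $x$ yields
\begin{equation*}
\partial_{t}h=-\eta(t)\,\Phi(x,t)+h,\qquad \Phi(x,t):=\frac{f(x)(h^{2}+|\nabla h|^{2})^{n/2}}{\varphi(1/h)\det(\nabla^{2}h+hI)},
\end{equation*}
with $h(\cdot,0)=h_{\Omega_{0}}$ and $\eta(t)=\omega_{n-1}/\int_{\sn}\tfrac{f}{\varphi(1/h)}\,dx$. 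Even symmetry is preserved because $f$ is even and the equation is invariant under $x\mapsto -x$; short-time existence follows from standard parabolic theory since the operator is uniformly parabolic about the smooth strictly convex datum.

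The core step is to extract a monotone functional. Setting $\mathcal{J}(t):=\int_{\sn}f(x)\Re(1/h)\,dx$, the relation $\Re'(s)=\pm 1/(s\varphi(s))$ gives
\begin{equation*}
\frac{d\mathcal{J}}{dt}=\pm\,\frac{\omega_{n-1}A-B^{2}}{B},\qquad A:=\int_{\sn}\frac{f\Phi}{h\varphi(1/h)}\,dx,\ \ B:=\int_{\sn}\frac{f}{\varphi(1/h)}\,dx,
\end{equation*}
where the sign is $+$ under (i) and $-$ under (ii). The total Aleksandrov integral curvature identity $\int_{\sn}\frac{h\det(\nabla^{2}h+hI)}{(h^{2}+|\nabla h|^{2})^{n/2}}dx=\omega_{n-1}$ gives $\int_{\sn}\tfrac{fh}{\varphi(1/h)\Phi}\,dx=\omega_{n-1}$, so applying Cauchy--Schwarz to $B=\int\sqrt{\tfrac{f\Phi}{h\varphi(1/h)}}\sqrt{\tfrac{fh}{\varphi(1/h)\Phi}}\,dx$ yields $B^{2}\le\omega_{n-1}A$, with equality exactly when $\Phi\equiv ch$; this equality case is precisely the stationary equation \eqref{Or-Mong}. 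Hence $\mathcal{J}$ is non-decreasing under (i) and non-increasing under (ii), and becomes constant only at a solution of \eqref{Or-Mong}.

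Next I would use this monotonicity together with origin symmetry to obtain uniform two-sided $C^{0}$ bounds on $h$. In case (i), the reserve $\mathcal{J}(t)\ge\mathcal{J}(0)>\hat{C}$ against the hypothesis $\int f\Re(1/|x\cdot\theta|)\,dx\le\hat{C}$ for every $\theta$ prevents $\Omega_{t}$ from elongating: if $\mathrm{diam}(\Omega_{t})\to\infty$ along some $\theta_{t}$, origin symmetry forces $h\ge\tfrac{1}{2}d_{t}|x\cdot\theta_{t}|$, so $\Re(1/h)\to 0$ pointwise by $\Re(0)=0$ and dominated convergence contradicts the lower bound on $\mathcal{J}$; a parallel argument on the polar body rules out $h\to 0$ on a great subsphere. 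In case (ii), $\Re(0)=\infty$ combined with $\mathcal{J}(t)\le\mathcal{J}(0)$ forbids $h\to\infty$, and a symmetric use of $\Re(\infty)=0$ gives the lower bound. With two-sided $C^{0}$ control, $C^{1}$ bounds follow from $|\nabla h|^{2}\le|X|^{2}-h^{2}\le\mathrm{diam}^{2}$, and the $C^{2}$ upper and lower bounds on the principal radii $\nabla^{2}h+hI$ come from Tso-type maximum principle arguments on auxiliary functions like $\log\det(\nabla^{2}h+hI)-\tfrac{\lambda}{2}|X|^{2}$; these establish uniform parabolicity, after which Krylov--Safonov and Schauder theory yield all higher derivative bounds uniform in $t$.

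Long-time existence then follows by standard continuation. For convergence, the monotonicity and boundedness of $\mathcal{J}$ give $\int_{0}^{\infty}|\dot{\mathcal{J}}(t)|\,dt<\infty$, so along some $t_{k}\to\infty$ we have $\dot{\mathcal{J}}(t_{k})\to 0$; the equality case of the Cauchy--Schwarz argument forces $\Phi(\cdot,t_{k})/h(\cdot,t_{k})$ to converge to a constant, and a $C^{\infty}$-convergent subsequence of $h(\cdot,t_{k})$ produces a smooth, origin-symmetric, strictly convex limit solving \eqref{Or-Mong} with $\gamma=1/\eta_{\infty}$. I expect the main obstacle to be the sharp $C^{0}$ estimate in case (i): ruling out degeneration of $\Omega_{t}$ to a lower-dimensional set requires delicate accounting for the gap $\mathcal{J}(0)-\hat{C}$ against the possibly slow growth of $\Re$ at infinity, since $f\Re(1/h)$ can remain integrable in partial-collapse scenarios where $h$ vanishes only on a small subsphere.
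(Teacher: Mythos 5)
Your reduction to a scalar flow, your monotone functional $\mathcal{J}(t)=\int_{\sn}f\,\Re(1/h)\,dx$, the Cauchy--Schwarz computation giving $B^{2}\le\omega_{n-1}A$ via the identity $\int_{\sn}h\det(\nabla^{2}h+hI)(h^{2}+|\nabla h|^{2})^{-n/2}dx=|\sn|$, and the $C^{0}$ \emph{upper} bounds in both cases all match the paper's argument. You also correctly foresee the difficulty: this is where the proposal has a genuine gap.

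The problem is the $C^{0}$ \emph{lower} bound, i.e.\ ruling out collapse of $\Omega_{t}$ to a lower-dimensional set, and your proposed mechanisms do not close it. In case (i), collapse of an origin-symmetric $\Omega_{t}$ (with $h\le C$ already known) forces $h\to 0$ only near a great subsphere, and, as you yourself observe, $f\,\Re(1/h)$ can remain in $L^{1}$ there, so the monotonicity of $\mathcal{J}$ gives no contradiction; moreover one has no a priori upper bound on $\mathcal{J}(t)$ until \emph{after} the $C^{0}$ estimate is in hand. In case (ii) the situation is worse: $\Re(\infty)=0$, so $h\to 0$ makes $\Re(1/h)\to 0$, which is entirely consistent with $\mathcal{J}$ non-increasing; the ``symmetric use of $\Re(\infty)=0$'' produces no contradiction at all. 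The vague appeal to ``a parallel argument on the polar body'' is not backed by any hypothesis on the polar body in the theorem.

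The missing ingredient is a second monotone/conserved quantity built into the normalization $\eta(t)$. The choice $\eta(t)=|\sn|\big/\int_{\sn}\tfrac{f}{\varphi(1/h)}dx$ is designed precisely so that, using the change of measure $\tfrac{h}{\kappa}\,dx=\rho^{n}\,du$, the entropy $\int_{\sn}\log\rho(u,t)\,du$ is \emph{constant} along the flow (the paper's Lemma~\ref{logcons}). With the upper bound $h\le C$ already established, Blaschke selection applied to a collapsing sequence $\Omega_{t_{k}}$ yields a limit body contained in a hyperplane, hence $\rho\equiv 0$ a.e.\ on $\sn$, which forces $\int_{\sn}\log\rho\,du\to-\infty$ and contradicts the conservation law. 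This is the step your proposal needs and does not supply. Once the two-sided $C^{0}$ bound is in place, the rest of your outline ($C^{1}$ via $|\nabla h|^{2}=\rho^{2}-h^{2}$, $C^{2}$ by maximum-principle arguments on a Tso-type auxiliary function, Krylov/Schauder, continuation, and extraction of a convergent subsequence from $\int_{0}^{\infty}|\dot{\mathcal{J}}|\,dt<\infty$) is sound and parallels the paper, even if your specific $C^{2}$ auxiliary function differs from the one used there.
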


 In view of Theorem \ref{main*}, it should be remarked that, since $\Re$ is decreasing in condition (ii), we automatically have
\[
\int_{\sn}f(x)\Re (1/|x\cdot \theta|)dx<\infty, \ \forall \theta\in \sn.
\]
Hence, the integral conditions in (i) compared to the condition (ii) may be not more restrictive.

The organization of this paper goes as follows: In Section \ref{Sec2}, we collect some basic knowledge about convex bodies. In Section \ref{Sec3}, we introduce the geometric flow and functional associated with the Orlicz Aleksandrov problem. In Section \ref{Sec4} and \ref{Sec5}, we obtain the priori estimates of the solution to the relevant flow. In Section \ref{Sec6}, we complete the proof of Theorem \ref{main*}. At last, we shall provide a special uniqueness result to the solution of equation \eqref{Or-Mong} under mild monotonicity assumption on $\varphi$.

\section{Preliminaries}
\label{Sec2}
In this section, we list some basic facts regarding convex bodies. For  quick and good references, please refer to the books of Gardner \cite{G06} and Schneider \cite{S14}.

Our setting will put on the $n$-Euclidean space ${\rnnn}$. Denote by ${\sn}$ the unit sphere. A convex body is a compact convex set of ${\rnnn}$ with non-empty interior. For $Y,Z\in {\rnnn}$, $Y\cdot Z$ stands for the standard inner product. For $Y\in{\rnnn}$, we denote by $|Y|=\sqrt{Y\cdot Y}$ the Euclidean norm.

Let $\Omega$ be a convex body containing the origin in ${\rnnn}$, and let $h(\Omega,\cdot)$ be the support function of $\Omega$ (with respect to the origin), i.e.,  for any $x\in {\sn}$,
\begin{equation*}
h(\Omega,x)=\max\{x\cdot Y:Y \in \Omega \}.
\end{equation*}
The map $g:\partial \Omega\rightarrow {\sn}$ denotes the Gauss map of $\partial\Omega$. Meanwhile, for $\omega\subset {\sn}$, the inverse of Gauss map $g$ is expressed as
\begin{equation*}
g^{-1}(\omega)=\{Z\in \partial \Omega:  g(Z) {\rm \ is \ defined \ and }\ g(Z)\in \omega\}.
\end{equation*}
For simplicity in the subsequence, we abbreviate $g^{-1}$ as $X$.
For a convex body $\Omega$ being of class $C^{2}_{+}$, i.e., its boundary is of class $C^{2}$ and of positive Gauss curvature, the support function of $\Omega$ can be written as
\begin{equation}\label{hhom}
h(\Omega,x)=x\cdot X(x)=g(Z)\cdot Z, \ {\rm where} \ x\in {\sn}, \ g(Z)=x \ {\rm and} \ Z\in \partial \Omega.
\end{equation}
In fact, we can parametrize $\partial \Omega$ by $X(x)$. Let $\{e_{1},e_{2},\ldots, e_{n-1}\}$ be a local orthonormal frame on ${\sn}$, $h_{i}$ be the first order covariant derivatives of $h(\Omega,\cdot)$ on ${\sn}$ with respect to a local orthonormal frame. Differentiating \eqref{hhom} with respect to $e_{i}$ , we get
\[
h_{i}=e_{i}\cdot X(x)+x\cdot X_{i}(x).
\]
Since $X_{i}$ is tangent to $ \partial \Omega$ at $X(x)$, we obtain
\begin{equation}\label{Fi}
h_{i}=e_{i}\cdot X(x).
\end{equation}
Combining \eqref{hhom} and \eqref{Fi}, we have
\begin{equation}\label{Fdef}
X(x)=\sum_{i} h_{i}(\Omega,x)e_{i}+h(\Omega,x)x=\nabla h(\Omega,x)+h(\Omega,x)x.
\end{equation}
Here $\nabla$ is the spherical gradient. On the other hand, since we can extend $h(\Omega,x)$ to $\rnnn$ as a 1-homogeneous function $h(\Omega, \cdot)$, then restrict the gradient of $h(\Omega,\cdot)$ on $\sn$, it yields that
\begin{equation}\label{hf}
\overline{\nabla} h(\Omega,x)=X(x), \ \forall x\in{\sn},
\end{equation}
where $\overline{\nabla}$ is the gradient operator in $\rnnn$. Let $h_{ij}$ be the second order covariant derivatives of $h(\Omega,\cdot)$ on ${\sn}$ with respect to a local orthonormal frame. Then, applying \eqref{Fdef} and \eqref{hf}, we have the following equalities:
\begin{equation}\label{hgra}
\overline{\nabla} h(\Omega,x)=\sum_{i}h_{i}e_{i}+hx, \quad X_{i}(x)=\sum_{j}(h_{ij}+h\delta_{ij})e_{j}.
\end{equation}
The Gauss curvature of $\partial \Omega$ at $X(x)$ is given by
\begin{equation*}
\kappa=\frac{1}{det(\nabla^{2}h+hI)},
\end{equation*}
where $\nabla^{2}h$ denotes the spherical Hessian matrix of $h$, and $I$ is the identity matrix.

The radial function $\rho$ of $\Omega$ is given by
\[
\rho(u)=\max\{\lambda> 0: \lambda u\in \Omega\},\quad \forall u\in {\sn}.
\]
It is clear to see that $\rho(u)u\in \partial \Omega$ for any $u\in \sn$, then the Gauss map $g$ of $\partial \Omega$ can be expressed as
\begin{equation*}\label{}
g(\rho(u)u)=\frac{\rho(u)u-\nabla \rho}{\sqrt{\rho^{2}+|\nabla \rho|^{2}}}.
\end{equation*}
Let $u$ and $x$ be related by the following equality:
\begin{equation}\label{hprl}
\rho(u)u=\overline{\nabla }h(\Omega,x)=X(x)=\nabla h+hx,
\end{equation}
then we obtain (see, e.g., \cite{GLM09})
\begin{equation}\label{xur}
x=\frac{\rho(u)u-\nabla \rho}{\sqrt{\rho^{2}+|\nabla \rho|^{2}}},\quad u=\frac{\nabla h+hx}{\sqrt{|\nabla h|^{2}+h^{2}}},
\end{equation}
and the following formula is clear (see for example \cite{LSW200})
\begin{equation}\label{hp}
\frac{h(x)}{\kappa(x)}dx=\rho^{n}(u)du.
\end{equation}

\section{The geometric flow and relevant functional}
\label{Sec3}
In this section, we are in the place to introduce the geometric flow and the relevant functional.

Let $\Omega_{0}$ be a smooth, origin symmetric and strictly convex body in $\rnnn$, $f: \sn\rightarrow (0,\infty)$ be a smooth function , as presented above, we are concerned with a family of convex hypersurfaces $\partial \Omega_{t}$ parameterized by smooth map $X(\cdot,t):\sn\rightarrow \rnnn$ satisfying the following flow equation,
\begin{equation}\label{xOrflow}
\left\{
\begin{array}{lr}
\frac{\partial X(x,t)}{\partial t}=-\frac{f(\nu)|X|^{n}\kappa \nu \eta(t)}{\varphi(1/(X\cdot \nu)) }+X(x,t);  \\
X(x,0)=X_{0}(x),
\end{array}\right.
\end{equation}
where $\kappa$ is the Gauss curvature of the hypersurface $\partial \Omega_{t}$ at $X(\cdot,t)$, $\nu=x$ is the unit outer normal vector of $\partial\Omega_{t}$ at $X(\cdot,t)$, and for any $u\in \sn$, $\eta(t)$ is defined by
\begin{equation}\label{yi}
\eta(t)=\frac{\int_{\sn}du}{\int_{\sn}\frac{f(x)}{\varphi(1/(X\cdot \nu))}dx}.
\end{equation}
Taking the scalar product of both sides of the equation and the of the initial condition in \eqref{xOrflow} by $\nu$, by means of the definition of support function, we describe the flow equation associated with the support function as
\begin{equation}\label{hOrflow}
\left\{
\begin{array}{lr}
\frac{\partial h(x,t)}{\partial t}=-\frac{f(x)\rho(u,t)^{n}\kappa  \eta(t)}{\varphi(1/h) }+h(x,t);  \\
h(x,0)=h_{0}(x),
\end{array}\right.
\end{equation}
where $\rho(u,t)$ is the radial function of $\Omega_{t}$ for any $u\in \sn$ satisfying \eqref{hprl}, this together with the definition of support function \eqref{hhom}, then the following relation between $x$ and $u$ can be deduced:
\begin{equation}\label{ph}
\rho(u,t)(u\cdot x)=h(x,t).
\end{equation}
 Applying \eqref{ph}, as shown in \cite{CHZ19}, we obtain
\begin{equation}\label{pht}
\partial_{t}\rho(u,t)=\frac{\rho(u,t)}{h(x,t)}\partial_{t}h(x,t).
\end{equation}
Combining \eqref{hOrflow} with  \eqref{pht}, it is not hard to see that $\rho(u,t)$ satisfies the following flow equation,
\begin{equation}\label{pOrflow}
\left\{
\begin{array}{lr}
\frac{\partial \rho(u,t)}{\partial t}=-\frac{f(x)\rho^{n+1}\kappa  \eta(t)}{\varphi(1/h) h(x,t) }+\rho(u,t);  \\
\rho(u,0)=\rho_{0}(u).
\end{array}\right.
\end{equation}
In the study of the flow \eqref{xOrflow}, it is essential to derive the following results.
\begin{lem}\label{logcons}
Under the assumptions of Theorem \ref{main*}. Let $\Omega_{t}$ be a smooth, origin symmetric, strictly convex solution to the flow \eqref{xOrflow}. Then
\begin{equation*}
\int_{\sn}{\rm log}\rho(u,t)du=constant
\end{equation*}
for $t\geq 0$. Here $\rho(\cdot,t)$ is the radial function of $\Omega_{t}$.
\end{lem}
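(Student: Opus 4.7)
The plan is to differentiate $I(t):=\int_{\sn}\log\rho(u,t)\,du$ under the integral sign and show that the normalizing factor $\eta(t)$ was precisely chosen so that $I'(t)=0$. This will exhibit the lemma as the infinitesimal version of the very definition \eqref{yi}.

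First, I would apply the flow equation \eqref{pOrflow} for the radial function to compute
\[
\frac{\partial}{\partial t}\log\rho(u,t)=\frac{1}{\rho}\frac{\partial\rho}{\partial t}=-\frac{f(x)\rho^{n}\kappa\,\eta(t)}{\varphi(1/h)\,h(x,t)}+1,
\]
where $x$ is the unit outer normal corresponding to the radial direction $u$ under the Gauss map of $\partial\Omega_{t}$. Integrating over $\sn$, assuming enough regularity of the flow to differentiate under the integral (which is justified by the smoothness assumption on $\Omega_{t}$), gives
\[
I'(t)=-\eta(t)\int_{\sn}\frac{f(x)\rho^{n}\kappa}{\varphi(1/h)\,h(x,t)}\,du+\int_{\sn}du.
\]

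Next, I would convert the first integral from a $du$-integral to a $dx$-integral using the Jacobian identity \eqref{hp}, namely $\rho^{n}(u)\,du=\frac{h(x)}{\kappa(x)}\,dx$. After substitution the factor $\rho^{n}\kappa/h$ cancels exactly, leaving
\[
\int_{\sn}\frac{f(x)\rho^{n}\kappa}{\varphi(1/h)\,h(x,t)}\,du=\int_{\sn}\frac{f(x)}{\varphi(1/h(x,t))}\,dx.
\]

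Finally, I would plug in the explicit formula \eqref{yi} for $\eta(t)$. The definition was tailor-made so that
\[
\eta(t)\int_{\sn}\frac{f(x)}{\varphi(1/h(x,t))}\,dx=\int_{\sn}du,
\]
and therefore $I'(t)=-\int_{\sn}du+\int_{\sn}du=0$, whence $I(t)$ is constant in $t$. There is no genuine obstacle here; the main conceptual point is the exact matching between the change of variables \eqref{hp} and the normalization \eqref{yi}, which is what makes the integral $\int_{\sn}\log\rho\,du$ a conserved quantity and will later be the key ingredient for obtaining uniform $C^{0}$-bounds on $h$ and $\rho$ along the flow.
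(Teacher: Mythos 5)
Your proposal is correct and follows essentially the same approach as the paper's proof: differentiate $\int_{\sn}\log\rho\,du$ using the radial flow equation \eqref{pOrflow}, convert the $du$-integral to a $dx$-integral via the Jacobian identity \eqref{hp}, and then observe that the normalization \eqref{yi} is designed exactly to make the two terms cancel.
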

\begin{proof}
Using  \eqref{hp}, \eqref{yi} and \eqref{pOrflow}, we have
\begin{equation}
\begin{split}
\label{}
\frac{d}{dt}\int_{\sn}{\rm log}\rho(u,t)du&=\int_{\sn}\frac{1}{\rho(u,t)}\frac{d \rho(u,t)}{dt}du\\
&=\int_{\sn}\frac{1}{\rho(u,t)}\left(\rho(u,t)-\frac{f(x)\rho(u,t)^{n+1}\kappa }{\varphi(1/h)h(x,t)}\frac{\int_{\sn}du}{\int_{\sn}\frac{f(x)}{\varphi(1/h)}dx}\right)du\\
&=\int_{\sn}du-\int_{\sn}\frac{f(x)}{\varphi(1/h)h(x,t)}\rho(u,t)^{n}\kappa du\frac{\int_{\sn}du}{\int_{\sn}\frac{f(x)}{\varphi(1/h)}dx}\\
&=\int_{\sn}du-\int_{\sn}\frac{f(x)}{\varphi(1/h)h(x,t)}h(x,t)dx\frac{\int_{\sn}du}{\int_{\sn}\frac{f(x)}{\varphi(1/h)}dx}\\
&=\int_{\sn}du-\int_{\sn}du\\
&=0.
\end{split}
\end{equation}
This completes the proof.
\end{proof}
Now, we set the relevant functional associated with the flow \eqref{xOrflow} as:
\begin{equation*}
P(t)=\int_{\sn}\Re(1/h(x,t))f(x)dx, \quad for \ \quad t\geq 0,
\end{equation*}
which will be equipped with the monotonicity along the flow \eqref{xOrflow}. It is shown in the following Lemmas.
\begin{lem}\label{non-decre}
If the condition $(i)$ of Theorem \ref{main*} holds, then along the flow \eqref{xOrflow}, $P(t)$ is non-decreasing.
\end{lem}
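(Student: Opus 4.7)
The plan is to differentiate $P(t)$ directly, substitute the flow equation \eqref{hOrflow}, and then exploit the defining relation $\Re'(s)=1/(s\varphi(s))$ from condition (i) of Theorem \ref{main*} to rewrite the derivative as a manifestly non-negative quantity via Cauchy--Schwarz.

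First, I would compute, using $\Re'(1/h)=h/\varphi(1/h)$,
\[
\frac{dP}{dt}=\int_{\sn} f(x)\,\Re'(1/h)\cdot\bigl(-1/h^{2}\bigr)\partial_{t}h\,dx=-\int_{\sn}\frac{f(x)}{h\,\varphi(1/h)}\,\partial_{t}h\,dx.
\]
Inserting the flow equation \eqref{hOrflow} for $\partial_{t}h$, the zeroth order term $h$ cancels the factor $h$ in the denominator and produces
\[
\frac{dP}{dt}=\eta(t)\int_{\sn}\frac{f(x)^{2}\,\rho^{n}\kappa}{h\,\varphi(1/h)^{2}}\,dx-\int_{\sn}\frac{f(x)}{\varphi(1/h)}\,dx.
\]

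Next, I would switch to the radial parametrization via \eqref{hp}, so that $dx=(\rho^{n}\kappa/h)\,du$ under the identification by the Gauss map. Writing $\tilde{f},\tilde{h},\tilde{\kappa}$ for the values at $x(u)$ and setting
\[
q(u)=\frac{\tilde f(u)\,\rho(u)^{n}\,\tilde\kappa(u)}{\tilde h(u)\,\varphi(1/\tilde h(u))},
\]
a direct bookkeeping gives $\int_{\sn}\tfrac{f^{2}\rho^{n}\kappa}{h\varphi^{2}}\,dx=\int_{\sn}q^{2}\,du$ and $\int_{\sn}\tfrac{f}{\varphi}\,dx=\int_{\sn}q\,du$. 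Combined with the definition \eqref{yi}, which now reads $\eta(t)=\int_{\sn}du\,/\int_{\sn}q\,du$, the derivative collapses to
\[
\frac{dP}{dt}=\frac{\bigl(\int_{\sn}du\bigr)\int_{\sn}q^{2}\,du-\bigl(\int_{\sn}q\,du\bigr)^{2}}{\int_{\sn}q\,du}.
\]

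Finally I would invoke the Cauchy--Schwarz inequality on $\sn$, namely $\bigl(\int_{\sn}q\,du\bigr)^{2}\leq\bigl(\int_{\sn}1\,du\bigr)\bigl(\int_{\sn}q^{2}\,du\bigr)$, which makes the numerator non-negative, while the positivity of $f$ and $\varphi$ makes $\int_{\sn}q\,du>0$. This yields $dP/dt\geq 0$. I do not expect a serious obstacle here; the only delicate point is the bookkeeping in the change of variables, ensuring that exactly one Jacobian $\rho^{n}\kappa/h$ pairs with the integrand so that both integrals reduce cleanly to moments of $q$. Notably the integral hypotheses on $\Re$ in condition (i) play no role in the monotonicity itself; they will be used later to control the level of $P(t)$ for a priori estimates.
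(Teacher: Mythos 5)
Your proof is correct and follows essentially the same strategy as the paper: differentiate $P(t)$, substitute the flow equation using $\Re'(1/h)=h/\varphi(1/h)$, invoke the Jacobian identity \eqref{hp}, and finish with Cauchy--Schwarz. The only cosmetic difference is that you pass both integrals to $u$-coordinates and compare $q$ with $1$, whereas the paper stays in $x$-coordinates and writes $\int_{\sn}du=\int_{\sn}\tfrac{h}{\kappa\rho^{n}}dx$ before applying H\"older; these are the same inequality presented in different variables.
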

\begin{proof}
Due to $\Re'(1/h)=\frac{h}{\varphi(1/h)}$, by virtue of \eqref{yi} and \eqref{hOrflow}, we have
\begin{equation*}
\begin{split}
\label{}
P^{'}(t)&=\int_{\sn}-\frac{f}{h(x,t)\varphi(1/h)}\frac{\partial h(x,t)}{\partial t}dx\\
&=\int_{\sn}-\frac{f}{h(x,t)\varphi(1/h)}\left(h(x,t)-\frac{f\rho^{n}\kappa \eta(t)}{\varphi(1/h)}\right)dx\\
&=\int_{\sn}-\frac{f}{\varphi(1/h)}dx+\int_{\sn}\frac{f^{2}\rho^{n}\kappa}{h \varphi(1/h)^{2}}dx \frac{\int_{\sn}du}{\int_{\sn}\frac{f}{\varphi(1/h)}dx},
\end{split}
\end{equation*}
which turns into
\begin{equation}
\begin{split}
\label{derpt}
P^{'}(t)\int_{\sn}\frac{f}{\varphi(1/h)}dx&=-\left(\int_{\sn}\frac{f}{\varphi(1/h)}dx\right)^{2}+\int_{\sn}\frac{f^{2}\rho^{n}\kappa}{h \varphi(1/h)^{2}}dx\int_{\sn}du\\
&=-\left(\int_{\sn}\frac{f}{\varphi(1/h)}dx\right)^{2}+\int_{\sn}\frac{f^{2}\rho^{n}\kappa}{h \varphi(1/h)^{2}}dx\int_{\sn}\frac{h}{\kappa \rho^{n}}dx\\
&=-\left(\int_{\sn}\sqrt{\frac{f^{2}\rho^{n}\kappa}{h\varphi(1/h)^{2}}}\sqrt{\frac{h}{\kappa \rho^{n}}}dx\right)^{2}+\int_{\sn}\frac{f^{2}\rho^{n}\kappa}{h \varphi(1/h)^{2}}dx\int_{\sn}\frac{h}{\kappa \rho^{n}}dx \\
&\geq 0,
\end{split}
\end{equation}
where the last inequality is due to the H\"{o}lder inequality. Therefore, $P(t)$ is non-decreasing.
\end{proof}

\begin{lem}\label{non-incre}
If the condition $(ii)$ of Theorem \ref{main*} holds, then along the flow \eqref{xOrflow}, $P(t)$ is non-increasing.
\end{lem}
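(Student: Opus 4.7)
The plan is to mirror the calculation of Lemma \ref{non-decre}, with the key observation that the defining integral for $\Re$ in condition (ii) differs from that in condition (i) only by orientation, so the derivative picks up an opposite sign. Specifically, under (ii) we have $\Re(s) = \int_{s}^{\infty}\frac{1}{t\varphi(t)}dt$, which yields $\Re'(s) = -\frac{1}{s\varphi(s)}$, and therefore $\Re'(1/h) = -\frac{h}{\varphi(1/h)}$. This is exactly the negative of the corresponding quantity used in the proof of Lemma \ref{non-decre}.

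First I would differentiate $P(t) = \int_{\sn}\Re(1/h(x,t))f(x)\,dx$ under the integral sign, applying the chain rule together with $\frac{d}{dt}(1/h) = -h^{-2}\,\partial_t h$. The two minus signs combine to give
\[
P'(t) = \int_{\sn} \frac{f(x)}{h(x,t)\varphi(1/h)}\,\frac{\partial h(x,t)}{\partial t}\,dx,
\]
which is the negative of the expression obtained in the proof of Lemma \ref{non-decre}.

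Next I would substitute the flow equation \eqref{hOrflow} for $\partial_t h$ and use the definition of $\eta(t)$ from \eqref{yi}, followed by the identity \eqref{hp} to rewrite $\int_{\sn}du$ as $\int_{\sn}\frac{h}{\kappa\rho^n}dx$. Exactly as in \eqref{derpt}, this produces
\[
P'(t)\int_{\sn}\frac{f}{\varphi(1/h)}dx = \left(\int_{\sn}\frac{f}{\varphi(1/h)}dx\right)^{2} - \int_{\sn}\frac{f^{2}\rho^{n}\kappa}{h\varphi(1/h)^{2}}dx\int_{\sn}\frac{h}{\kappa\rho^{n}}dx.
\]
Finally, applying the H\"older (Cauchy--Schwarz) inequality to the same factorization $\sqrt{\tfrac{f^{2}\rho^{n}\kappa}{h\varphi(1/h)^{2}}}\cdot\sqrt{\tfrac{h}{\kappa\rho^{n}}} = \tfrac{f}{\varphi(1/h)}$ used in Lemma \ref{non-decre}, the right-hand side is non-positive, so $P'(t)\leq 0$ (since the multiplier $\int_{\sn}\frac{f}{\varphi(1/h)}dx$ is positive). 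Hence $P(t)$ is non-increasing.

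There is no real obstacle here beyond careful sign-tracking: the monotonicity is purely a consequence of the sign of $\Re'$, which reverses between cases (i) and (ii), while the structural H\"older inequality giving the one-sided bound is unchanged. One should only verify that the integrals $\int_{\sn}\frac{f}{\varphi(1/h)}dx$ and $\int_{\sn}\frac{f^{2}\rho^{n}\kappa}{h\varphi(1/h)^{2}}dx$ remain finite along the smooth flow, which follows immediately from the smoothness, strict convexity, and origin-symmetry of $\Omega_{t}$ together with the smoothness and positivity of $f$ and $\varphi$.
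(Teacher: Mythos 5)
Your proposal is correct and follows essentially the same route as the paper: compute $\Re'(1/h)=-\tfrac{h}{\varphi(1/h)}$, differentiate $P$, substitute the flow equation and $\eta(t)$, rewrite $\int_{\sn}du$ via \eqref{hp}, and finish with the H\"older/Cauchy--Schwarz inequality applied to the factorization $\sqrt{\tfrac{f^{2}\rho^{n}\kappa}{h\varphi(1/h)^{2}}}\cdot\sqrt{\tfrac{h}{\kappa\rho^{n}}}=\tfrac{f}{\varphi(1/h)}$. The paper states this exactly as the sign-flipped mirror of Lemma \ref{non-decre}, which is what you describe.
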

\begin{proof}
The proof is similar to Lemma \ref{non-decre}. Since $\Re'(1/h)=-\frac{h}{\varphi(1/h)}$, we have
\begin{equation}
\begin{split}
\label{incrpt}
P^{'}(t)\int_{\sn}\frac{f}{\varphi(1/h)}dx&=\left(\int_{\sn}\frac{f}{\varphi(1/h)}dx\right)^{2}-\int_{\sn}\frac{f^{2}\rho^{n}\kappa}{h \varphi^{2}}dx\int_{\sn}du\\
&=\left(\int_{\sn}\frac{f}{\varphi(1/h)}dx\right)^{2}-\int_{\sn}\frac{f^{2}\rho^{n}\kappa}{h \varphi(1/h)^{2}}dx\int_{\sn}\frac{h}{\kappa \rho^{n}}dx\\
&=\left(\int_{\sn}\sqrt{\frac{f^{2}\rho^{n}\kappa}{h\varphi(1/h)^{2}}}\sqrt{\frac{h}{\kappa \rho^{n}}}dx\right)^{2}-\int_{\sn}\frac{f^{2}\rho^{n}\kappa}{h \varphi(1/h)^{2}}dx\int_{\sn}\frac{h}{\kappa \rho^{n}}dx\\
&\leq 0.
\end{split}
\end{equation}
 Therefore, $P(t)$ is non-increasing.
\end{proof}

\section{$C^{0},C^{1}$ estimates}
\label{Sec4}
In this section, we shall obtain the $C^{0}$, $C^{1}$ estimates of solutions to the flow \eqref{xOrflow}. Let us begin with completing the $C^{0}$ estimate.

\begin{lem}\label{C0}
Under the assumptions of Theorem \ref{main*}. Let $\Omega_{t}$ be a smooth, origin symmetric, and strictly convex solution to the flow \eqref{xOrflow}. Then, there exists a positive constant $C$, independent of $t$, such that
\begin{equation}\label{C0*}
\frac{1}{C}\leq h(x,t)\leq C, \ \forall (x,t)\in {\sn}\times (0,\infty),
\end{equation}
and
\begin{equation}\label{C00*}
\frac{1}{C}\leq \rho(u,t)\leq C, \ \forall (u,t)\in {\sn}\times (0,\infty).
\end{equation}
Here $h(x,t)$ and $\rho(u,t)$ are the support function and the radial function of $\Omega_{t}$. Furthermore, $\eta(t)$ is uniformly bounded above, and below away from zero.
\end{lem}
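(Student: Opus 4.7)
The plan is to reduce the four estimates to uniform two-sided bounds on the radial function $\rho(\cdot,t)$, and then to read off the bound on $\eta(t)$. For each origin-symmetric convex body $\Omega_{t}$, the equalities $h_{\min}(\Omega_{t})=\rho_{\min}(\Omega_{t})$ and $h_{\max}(\Omega_{t})=\rho_{\max}(\Omega_{t})$ hold, with both sides being the inradius and the circumradius of $\Omega_{t}$ respectively: the first equality follows because $\rho_{\min}(\Omega_{t})u\in\Omega_{t}$ for every $u\in\sn$ combined with convexity and $0\in\mathrm{int}\,\Omega_{t}$ forces $\rho_{\min}(\Omega_{t})B\subset\Omega_{t}$, and the second is just the identity $\max_{x}\max_{y\in\Omega_{t}}y\cdot x=\max_{y\in\Omega_{t}}|y|$. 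Hence \eqref{C0*} is equivalent to \eqref{C00*}. Once $h$ lies in a fixed compact subinterval of $(0,\infty)$, the continuity and strict positivity of $\varphi$ make $f/\varphi(1/h)$ lie between two positive constants on $\sn$, and the defining formula \eqref{yi} then pins $\eta(t)$ between two positive constants as well.

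For the upper bound on $\rho$, I would argue by contradiction using the monotonicity of $P(t)$ from Lemmas \ref{non-decre} and \ref{non-incre}. Let $R(t)=\max_{u}\rho(u,t)$, suppose $R(t_{k})\to\infty$ along a sequence, and let $u_{k}\in\sn$ realize the maximum. Origin-symmetry gives the segment $[-R(t_{k})u_{k},R(t_{k})u_{k}]\subset\Omega_{t_{k}}$, and hence $h(x,t_{k})\geq R(t_{k})|x\cdot u_{k}|$ for every $x\in\sn$. In case (i), $\Re$ is increasing and $R(t_{k})\geq 1$ eventually, so
\[
P(t_{k})=\int_{\sn}f(x)\Re(1/h(x,t_{k}))\,dx\leq\int_{\sn}f(x)\Re(1/|x\cdot u_{k}|)\,dx\leq\hat{C},
\]
contradicting $P(t_{k})\geq P(0)>\hat{C}$. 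In case (ii), $\Re$ is decreasing with $\lim_{s\to 0^{+}}\Re(s)=\infty$; on the cap $\{x\in\sn:|x\cdot u_{k}|\geq 1/2\}$ the bound $h\geq R(t_{k})/2\to\infty$ forces $\Re(1/h)\to\infty$ uniformly, hence $P(t_{k})\to\infty$, contradicting $P(t_{k})\leq P(0)<\infty$.

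The lower bound on $\rho$ is the more delicate step and is the expected main obstacle. I would invoke Lemma \ref{logcons} in the form $\int_{\sn}\log\rho(\cdot,t)\,du=\int_{\sn}\log\rho_{0}\,du=:c_{0}$. With the upper bound secured, the family $\{\Omega_{t}\}$ is Hausdorff pre-compact by Blaschke's selection theorem. Were $\rho_{\min}(t_{k})\to 0$ along a subsequence, any Hausdorff limit $\Omega_{\infty}$ would have to be degenerate (contained in a proper linear subspace $H\subset\rnnn$); indeed, if $\Omega_{\infty}$ had nonempty interior, then $\rho(\cdot,t_{k})$ would converge to $\rho_{\Omega_{\infty}}>0$ uniformly on $\sn$, keeping $\rho_{\min}(t_{k})$ away from zero. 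For such a degenerate $\Omega_{\infty}$, $\rho_{\Omega_{\infty}}(u)=0$ for every $u\in\sn\setminus H$, a set of full spherical measure, and Hausdorff convergence of $\Omega_{t_{k}}$ forces $\rho(u,t_{k})\to 0$ pointwise off $H$. Applying reverse Fatou to the uniform upper bound $\log\rho(\cdot,t_{k})\leq\log C_{1}$ yields
\[
c_{0}=\limsup_{k\to\infty}\int_{\sn}\log\rho(u,t_{k})\,du\leq\int_{\sn}\limsup_{k\to\infty}\log\rho(u,t_{k})\,du=-\infty,
\]
an absurdity. Hence $\rho\geq 1/C$ uniformly, completing \eqref{C00*}. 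The hardest aspect is coordinating three ingredients here: origin-symmetry, the compactness coming from the already-established upper bound, and the rigidity of the log-integral under collapse of $\Omega_{t}$ into a lower-dimensional set.
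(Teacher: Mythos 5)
Your proof is correct and follows essentially the same strategy as the paper: reduce both $C^{0}$ estimates to two-sided bounds on $\rho$ via $h_{\min}=\rho_{\min}$, $h_{\max}=\rho_{\max}$ for origin-symmetric bodies, obtain the upper bound from the monotonicity of $P(t)$ (Lemmas \ref{non-decre}, \ref{non-incre}) combined with the cone inclusion $h(x,t)\geq R(t)|x\cdot u_{k}|$, and derive the lower bound by contradiction from the invariance of $\int_{\sn}\log\rho\,du$ (Lemma \ref{logcons}) together with Blaschke compactness, after which the bound on $\eta(t)$ is immediate. Your use of reverse Fatou in place of the paper's $\varepsilon$-regularization of $\log\rho$, and your one-line integration in case (i) in place of the paper's cap decomposition with $R\to\infty$, are presentational polish rather than a different route.
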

\begin{proof}
In view of \cite[Lemma 2.6]{CL21}, we know
\begin{equation}\label{hpre}
\min_{\sn}h(x,t)=\min_{\sn}\rho(u,t), \quad \max_{\sn}h(x,t)= \max_{\sn}\rho(u,t).
\end{equation}
\eqref{hpre} illustrates that \eqref{C0*} and \eqref{C00*} are equivalent. So, for upper bound (or lower bound), we only need to establish \eqref{C0*} or \eqref{C00*}.

Our aim is to prove \eqref{C0*}. We first prove the upper bound of \eqref{C0*},  on the one hand, if the condition $(i)$ of Theorem \ref{main*} holds, assume that $R(t):=\max_{\sn} h(x,t)$ is attained at the north pole $\tilde{x}=(0,\cdots,0,1)$ at $t> 0$. By virtue of the convexity of $\Omega_{t}$ and the definition of $h(x,t)$, it follows that (see e.g., \cite[Lemma 2.6]{CL21})
\begin{equation}\label{convex}
h(x,t)\geq \max_{\sn}\rho(u,t)|u\cdot x| =\max_{\sn}h(x,t)|\tilde{x}\cdot x| =R(t)|x_{n}|,
\end{equation}
where $x_{n}$ is $n$-th coordinate component of $x$. Let $\varepsilon$ be a small  positive constant, then we set
 \[
\mathcal{O}^{*}=\{x\in \sn:|x_{n}|< \varepsilon\}.
 \]
Suppose $R(t)>1$, as $P$ is non-decreasing and $\Re\geq 0$ is strictly increasing, from \eqref{convex}, we obtain
\begin{equation}
\begin{split}
\label{PTT}
P(0)&\leq P(t)\\
&=\int_{\sn\backslash\mathcal{O}^{*}}\Re(1/h(x,t))f(x)dx+\int_{ \mathcal{O}^{*}}\Re(1/h(x,t))f(x)dx\\
&\leq (\max_{\sn} f)\Re (1/(\varepsilon R(t)))|\sn \backslash \mathcal{O}^{*}|+\int_{\mathcal{O}^{*}}\Re (1/(R(t)|x_{n}|))f(x)dx\\
& \leq (\max_{\sn} f)\Re (1/(\varepsilon R(t)))|\sn \backslash \mathcal{O}^{*}|+\int_{\sn}f(x)\Re(1/|x_{n}|)dx.
\end{split}
\end{equation}
Note that $\lim_{s\rightarrow 0}\Re(s)=0$, in view of \eqref{PTT}, as $R\rightarrow \infty$, we obtain
\begin{equation}\label{P0C}
P(0)\leq \hat{C}.
\end{equation}
 By our choice of $\Omega_{0}$, satisfying $P(0)>\hat{C}$, then \eqref{P0C} is violated. So the upper bound of $h(x,t)$ is obtained.

  On the other hand, if the condition $(ii)$ of Theorem \ref{main*} holds.  Supposing again $R(t):=\max_{\sn} h(x,t)$ is attained at the north pole $\tilde{x}=(0,\cdots,0,1)$ at $t> 0$. With the aid of again the convexity of $\Omega_{t}$, we have
  \[
  h(x,t)\geq R(t)|x_{n}|.
  \]
  Since $\Re\geq 0$ is strictly decreasing and $P(t)$ is non-increasing in time $t$, we obtain
 \begin{equation}
\begin{split}
\label{JJI}
P(0)&\geq P(t)\\
&\geq \int_{\sn}\Re(1/(R(t)|x_{n}|))f(x)dx\\
&\geq (\min_{\sn}f)\int_{\sn}\Re(1/(R(t)|x_{n}|))dx.
\end{split}
\end{equation}
  Denote $\mathcal{\tilde{O}}=\{x\in \sn: |x_{n}|\geq\frac{1}{2}\}$, then \eqref{JJI} becomes
\begin{equation*}
\begin{split}
\label{Up3}
P(0)&\geq (\min_{\sn}f)\int_{\{|x_{n}|\geq\frac{1}{2}\}\cap {\sn}}\Re(2/R(t))dx\\
&=(\min_{\sn}f)\Re(2/R(t))|\mathcal{\tilde{O}}|.
\end{split}
\end{equation*}
By virtue of $\lim_{s\rightarrow 0}\Re(s)=\infty$, it illustrates that $h(x,t)$ is uniformly bounded above. So we obtain the upper bound of \eqref{C0*}.

For the uniform lower bound of $h$, we argue by contradiction. Let $\{t_{k}\}\subset [0,\infty)$ be a sequence such that $h(x,t_{k})$ is not uniformly bounded away from 0, i.e.,
\[
\min_{\sn}h(\cdot, t_{k})\rightarrow 0 \quad as \quad k\rightarrow\infty.
\]
With the aid of the upper bound of \eqref{C0*},  using Blaschke selection theorem (see \cite{S14}), then there is a sequence in $\{\Omega_{t_{k}}\}$, which is still denoted by $\{\Omega_{t_{k}}\}$, such that
\[
\Omega_{t_{k}}\rightarrow \widetilde{\Omega}\quad as \quad k\rightarrow\infty.
\]
Since $\Omega_{t_{k}}$ is an origin-symmetric convex body, $\widetilde{\Omega}$ is also origin-symmetric. Then, we have
\[
\min_{\sn}h_{\widetilde{\Omega}}=\lim_{k\rightarrow\infty}\min_{\sn} h_{\Omega_{t_{k}}}=0.
\]
This implies that $\widetilde{\Omega}$ is contained in a hyperplane in $\rnnn$. Then, we have
\begin{equation}\label{op}
\rho_{\widetilde{\Omega}}=0\quad a.e.\  {\rm in} \ \sn.
\end{equation}
By virtue of \eqref{op} and Lemma \ref{logcons},  for any $\varepsilon>0$, we have
\begin{equation*}
\begin{split}
\label{Up3}
 \int_{\sn}\log \rho_{\Omega_{0}}du&=\int_{\sn}\log \rho_{\Omega_{t_{k}}}du\\
&\leq \lim_{k\rightarrow \infty}\int_{\sn}\log[\rho_{\Omega_{t_{k}}}+\varepsilon]du\\
&=\int_{\sn}\log \varepsilon du\\
&\rightarrow -\infty\quad as \quad \varepsilon\rightarrow 0,
\end{split}
\end{equation*}
which is a contradiction. Then, we have
\[
\min_{\sn \times (0,\infty)} h(x,t)\geq C
\]
for some positive constant $C$, independent of $t$. The lower bound of \eqref{C0*} follows.  The lower and upper bounds on $h$ imply bounds on $\eta(t)$. Hence, we complete the proof.
\end{proof}

The $C^{1}$ estimate is as follows.
\begin{lem}\label{C1} Under the assumptions of Theorem \ref{main*}. Let $\Omega_{t}$ be a smooth, origin symmetric, and strictly convex solution to the flow \eqref{xOrflow}. Then, there exists a positive constant $C$, independent of $t$, such that
\begin{equation*}\label{C1*}
|\nabla h(x,t)|\leq C, \ \forall (x,t)\in {\sn}\times (0,\infty),
\end{equation*}
and
\begin{equation*}\label{C11*}
|\nabla \rho(u,t)|\leq C, \ \forall (u,t)\in {\sn}\times (0,\infty)
\end{equation*}
for some $C> 0$, independent of $t$ .
\end{lem}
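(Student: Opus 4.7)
The plan is to derive the $C^{1}$ bounds directly from the $C^{0}$ bounds of Lemma \ref{C0} together with the geometric identities from Section \ref{Sec2} that link $h$, $\rho$ and their spherical gradients; no further PDE analysis along the flow is required, since both pieces of information reduce to algebra.

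First I would bound $|\nabla h|$. By \eqref{Fdef}, $X(x,t)=\nabla h(x,t)+h(x,t)x$ is an orthogonal decomposition of $X(x,t)\in\partial\Omega_{t}$ with respect to the local orthonormal frame $\{e_{1},\dots,e_{n-1},x\}$ of $\rnnn$ (the $\nabla h$-part is tangential to $\sn$ at $x$, the $hx$-part is normal). Taking norms and using $|X(x,t)|=\rho(u,t)$ for the pair $x\leftrightarrow u$ given by \eqref{hprl}, one obtains
\begin{equation*}
|\nabla h(x,t)|^{2}+h(x,t)^{2}=\rho(u,t)^{2},
\end{equation*}
so the upper bound $\rho\leq C$ from Lemma \ref{C0} yields $|\nabla h(x,t)|\leq C$ uniformly in $(x,t)$.

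For $|\nabla\rho|$, I would combine the first identity in \eqref{xur} with \eqref{ph}. Dotting the expression $x=(\rho u-\nabla\rho)/\sqrt{\rho^{2}+|\nabla\rho|^{2}}$ with $u$ and using that $\nabla\rho$ is tangential to $\sn$ at $u$ (hence $u\cdot\nabla\rho=0$) gives $u\cdot x=\rho/\sqrt{\rho^{2}+|\nabla\rho|^{2}}$, and then \eqref{ph} produces
\begin{equation*}
h(x,t)=\rho(u,t)(u\cdot x)=\frac{\rho(u,t)^{2}}{\sqrt{\rho(u,t)^{2}+|\nabla\rho(u,t)|^{2}}}.
\end{equation*}
Solving for $|\nabla\rho|^{2}$ yields
\begin{equation*}
|\nabla\rho(u,t)|^{2}=\frac{\rho(u,t)^{2}\bigl(\rho(u,t)^{2}-h(x,t)^{2}\bigr)}{h(x,t)^{2}},
\end{equation*}
and the uniform two-sided positive bounds on both $h$ and $\rho$ from Lemma \ref{C0} give $|\nabla\rho(u,t)|\leq C$. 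There is no real obstacle beyond consistently reading these identities with $x\in\sn$ paired to its radial counterpart $u\in\sn$ on $\partial\Omega_{t}$ at each time $t$; once that correspondence is fixed, the $C^{1}$ bounds are a purely algebraic consequence of the $C^{0}$ estimates already established.
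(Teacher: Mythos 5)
Your argument is correct and is precisely the paper's own proof: both derive the identities $\rho^{2}=h^{2}+|\nabla h|^{2}$ and $h=\rho^{2}/\sqrt{\rho^{2}+|\nabla\rho|^{2}}$ (equation \eqref{phph}) from \eqref{hhom}, \eqref{hprl}, \eqref{xur}, and then invoke the $C^{0}$ bounds of Lemma \ref{C0}. You simply spell out the algebra that the paper leaves implicit.
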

\begin{proof}
By means of \eqref{hhom}, \eqref{hprl} and \eqref{xur}, we obtain the following equalities:
\begin{equation}\label{phph}
\rho^{2}=h^{2}+|\nabla h|^{2},  \quad h=\frac{\rho^{2}}{\sqrt{\rho^{2}+|\nabla \rho|^{2}}}.
\end{equation}
Then \eqref{phph} and Lemma \ref{C0} lead to this lemma directly.
\end{proof}

\section{$C^{2}$ estimate}
\label{Sec5}
Utilizing the above $C^{0}, C^{1}$ estimates, the upper and lower bounds of principal curvatures will be derived. It is shown in the following result.
\begin{theo}\label{PCUL*}
 Under the assumptions of Theorem \ref{main*}. Let $\Omega_{t}$ be a smooth, origin symmetric and strictly convex solution to the flow \eqref{xOrflow}. Then there exists a positive constant $C$, independent of $t$, such that the principal curvatures $\kappa_{i}$ of $\Omega_{t}$, $i=1,\ldots,n-1$, are bounded from above and below, satisfying
\begin{equation}\label{PUL}
\frac{1}{C}\leq \kappa_{i}\leq C, \ \forall(x,t)\in {\sn}\times (0,\infty).
\end{equation}
\end{theo}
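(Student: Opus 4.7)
The plan is to combine the $C^{0}$ and $C^{1}$ estimates of Lemmas \ref{C0} and \ref{C1} with the maximum principle applied to two carefully chosen auxiliary functions, following the standard two-step strategy for Monge--Amp\`ere type flows. Writing the matrix of radii of curvature as $b_{ij}=h_{ij}+h\delta_{ij}$, whose eigenvalues are the reciprocals $1/\kappa_{i}$, the bound \eqref{PUL} is equivalent to a two-sided bound on the eigenvalues of $b_{ij}$. I will therefore (a) bound the Gauss curvature $\kappa=1/\det b_{ij}$ from above and below, and (b) bound the largest eigenvalue $\lambda_{\max}(b_{ij})$ from above; together with the bound on $\det b_{ij}$ from (a), this yields a lower bound on $\lambda_{\min}(b_{ij})$ and hence the desired two-sided bound on each $\kappa_{i}$.

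For step (a), the flow equation \eqref{hOrflow} can be rewritten as
\[
\kappa(x,t)\;=\;\frac{(h-\partial_{t}h)\,\varphi(1/h)}{f(x)\,\rho^{n}\,\eta(t)},
\]
so bounding $\kappa$ two-sidedly reduces, in view of Lemmas \ref{C0} and \ref{C1}, to bounding $\partial_{t}h/h$ two-sidedly. Following the approach of \cite{CHZ19,LSW20}, I would introduce $\Theta=\partial_{t}h/h$, derive its parabolic evolution equation (linearizing \eqref{hOrflow} produces an operator of the form $\partial_{t}-a^{ij}\nabla_{ij}$ with $a^{ij}$ proportional to the cofactor matrix of $b_{ij}$), and apply the maximum principle on $\sn\times[0,T]$. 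The smoothness of $f$ and $\varphi$ and the estimates of Section \ref{Sec4} ensure that the lower-order terms are uniformly bounded, so $|\Theta|\le C$, and hence $1/C\le\kappa\le C$ independent of $t$.

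For step (b), I would apply the maximum principle to an auxiliary function of the form
\[
W(x,t)\;=\;\log\lambda_{\max}\bigl(b_{ij}(x,t)\bigr)\;-\;A\log h(x,t)\;+\;\tfrac{B}{2}\,|\nabla h(x,t)|^{2},
\]
with $A,B>0$ to be tuned. At a space-time maximum of $W$, rotating the frame so that $b_{ij}$ is diagonal and $\lambda_{\max}$ is attained at the first entry, differentiating the logarithm of the right-hand side of \eqref{hOrflow} twice along the principal eigendirection and using the concavity of $\log\det$ on positive matrices yields, schematically,
\[
0\;\le\;-c_{0}\,\lambda_{\max}\;+\;C(A,B),
\]
where $c_{0}>0$ is controlled from below by the $C^{0}$--$C^{1}$ estimates together with the bounds on $\kappa$, $\varphi(1/h)$, $\varphi'(1/h)$ and $\eta(t)$ obtained in step (a). Choosing $A,B$ suitably forces $\lambda_{\max}\le C$, hence $\kappa_{\min}\ge 1/C$, and combined with the bound on $\det b_{ij}=1/\kappa$ from step (a) one deduces $\kappa_{i}\le C'$ for every $i$ as well, proving \eqref{PUL}.

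The main obstacle is step (b): choosing $W$ (and in particular the constants $A,B$) so that the bad curvature terms created by differentiating the Orlicz weight $f(x)\rho^{n}\kappa/\varphi(1/h)$ twice along the extremal eigendirection are strictly dominated by the good negative term $-c_{0}\lambda_{\max}$. Unlike the $L_{p}$ case, where $\varphi(s)=s^{p}$ has a clean homogeneity, the general $\varphi$ is only assumed smooth; one must therefore control $\varphi$, $\varphi'$ and $\varphi''$ through their values on the compact interval $[1/C,C]$ supplied by Lemma \ref{C0} and verify that this regularity suffices to close the maximum principle computation.
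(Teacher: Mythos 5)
Your two-step plan---first bound $\kappa$ from above via a maximum-principle argument on a normalised time-derivative of $h$, then bound $\lambda_{\max}(b_{ij})$ via a test function of the form $\log\lambda_{\max}-A\log h+B|\nabla h|^{2}$, absorbing $\varphi,\varphi',\varphi''$ through their values on the compact interval supplied by the $C^{0}$ estimate---is exactly the paper's argument; the paper's choices are $Q=-h_{t}/(h-\varepsilon_{0})$ (rather than $h_{t}/h$, the $\varepsilon_{0}$-shift streamlining the resulting ODE inequality) and $E=\log\lambda_{\max}(w_{ij})-d\log h+l|\nabla h|^{2}$ with the same explicit tuning of $d,l$ you describe. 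Your proposal is correct and matches the paper's proof in both structure and substance.
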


\begin{proof}
First, we shall prove the upper bound of Gauss curvature $\kappa$. It is essential to construct the following auxiliary function,
\begin{equation}\label{AF1}
Q(x,t)=\frac{\frac{f(x)\rho^{n}\kappa}{\varphi(1/h)} \frac{\int_{\sn}du}{\int_{\sn}\frac{f}{\varphi(1/h)}{dx}}-h(x,t)}{h-\varepsilon_{0}}=\frac{-h_{t}}{h-\varepsilon_{0}},
\end{equation}
where
\begin{equation*}\label{AF2}
\varepsilon_{0}=\frac{1}{2}\min_{{\sn}\times (0,\infty)}h(x,t)>0.
\end{equation*}
For any fixed $t\in (0,\infty)$, assume that the maximum of $Q(x,t)$ is achieved at $x_{0}$. Rotate the axes so that $\{w_{ij}\}$ is diagonal at $x_{0}$, where $w_{ij}:=h_{ij}+h\delta_{ij}$. Thus, we obtain that at $x_{0}$,
\begin{equation}\label{Up1}
0=\nabla_{i}Q=\frac{-h_{ti}}{h-\varepsilon_{0}}+\frac{h_{t}h_{i}}{(h-\varepsilon_{0})^{2}}.
\end{equation}
Then, by virtue of \eqref{Up1}, at $x_{0}$, we also obtain
\begin{equation}
\begin{split}
\label{Up2}
0\geq\nabla_{ii}Q&=\frac{-h_{tii}}{h-\varepsilon_{0}}+\frac{2h_{ti}h_{i}+h_{t}h_{ii}}{(h-\varepsilon_{0})^{2}}-\frac{2h_{t}h^{2}_{i}}{(h-\varepsilon_{0})^{3}}\\
&=\frac{-h_{tii}}{h-\varepsilon_{0}}+\frac{h_{t}h_{ii}}{(h-\varepsilon_{0})^{2}}.
\end{split}
\end{equation}
 Then, \eqref{Up2} tells
\begin{equation}
\begin{split}
\label{Up3}
-h_{tii}-h_{t}&\leq - \frac{h_{t}h_{ii}}{h-\varepsilon_{0}}-h_{t}\\
&=\frac{-h_{t}}{h-\varepsilon_{0}}[h_{ii}+(h-\varepsilon_{0}))]\\
&=Q(w_{ii}-\varepsilon_{0}).
\end{split}
\end{equation}
 Furthermore, applying \eqref{hOrflow} and \eqref{AF1}, we have
\begin{equation}
\begin{split}
\label{Up4*}
\partial_{t}Q&=\frac{-h_{tt}}{h-\varepsilon_{0}}+\frac{h^{2}_{t}}{(h-\varepsilon_{0})^{2}}\\
&=\frac{f}{h-\varepsilon_{0}}\left[\frac{\partial [det(\nabla ^{2}h+hI)]^{-1}}{\partial t}\frac{\rho^{n}}{\varphi(1/h)}\frac{\int_{\sn}du}{\int_{\sn}\frac{f}{\varphi(1/h)}dx}+\frac{\rho^{n}}{\varphi(1/h)}\kappa \frac{\partial \left[ \frac{\int_{\sn}du}{\int_{\sn}\frac{f}{\varphi(1/h)}dx} \right]}{\partial t} \right.\\
&\quad \left. +\kappa \frac{\partial_{t}\left(\frac{\rho^{n}}{\varphi(1/h)}\right)}{\partial t}\frac{\int_{\sn}du}{\int_{\sn}\frac{f}{\varphi(1/h)}dx} \right]
+Q+ Q^{2}.\\
\end{split}
\end{equation}
In light of \eqref{Up4*}, by using \eqref{Up3}, at $x_{0}$, one has
\begin{equation*}
\begin{split}
\label{}
\frac{\partial [det(\nabla^{2}_{\sn}h+hI)]^{-1}}{\partial t}&=-[det(\nabla^{2}_{\sn}h+hI)]^{-2}\sum_{i}\frac{\partial[det(\nabla^{2}_{\sn}h+hI)] }{\partial w_{ii}}(h_{tii}+h_{t})\\
&\leq[det(\nabla^{2}_{\sn}h+hI)]^{-2}\sum_{i}\frac{\partial[det(\nabla^{2}_{\sn}h+hI)] }{\partial w_{ii}}Q(w_{ii}-\varepsilon_{0})\\
&=\kappa Q[(n-1)-\varepsilon_{0}\sum_{i} w^{ii}],
\end{split}
\end{equation*}
where $\{w^{ij}\}$ is the inverse matrix of $\{w_{ij}\}$. Recall the fact that the eigenvalue of $\{w_{ij}\}$ and $\{w^{ij}\}$ are respectively the principal radii and principal curvature of $\partial \Omega_{t}$ (see for example \cite{U91}). Then we have
\begin{equation}
\begin{split}
\label{Up5}
\frac{\partial [det(\nabla^{2}_{\sn}h+hI)]^{-1}}{\partial t}&=\kappa Q[(n-1)-\varepsilon_{0}H]\\
&\leq \kappa Q[(n-1)-\varepsilon_{0}(n-1)\kappa^{\frac{1}{n-1}}],
\end{split}
\end{equation}
where $H$ denotes the mean curvature of $\partial\Omega_{t}$, and the last inequality stems from  $H\geq (n-1)(\Pi_{i} w^{ii})^\frac{1}{n-1}=(n-1)\kappa^{\frac{1}{n-1}}$.

In addition,
\begin{equation}
\begin{split}
\label{Up6}
\frac{\partial \left[ \frac{\int_{\sn}du}{\int_{\sn}\frac{f}{\varphi(1/h)}dx} \right]}{\partial t}&=-\frac{\int_{\sn}du}{\left(\int_{\sn}\frac{f}{\varphi(1/h)}dx\right)^{2}}\int_{\sn}\frac{f}{h^{2}\varphi(1/h)^{2}}\varphi^{'}(1/h)\frac{\partial h}{\partial t}dx\\
&=\frac{\int_{\sn}du}{\left(\int_{\sn}\frac{f}{\varphi(1/h)}dx\right)^{2}}\int_{\sn}\frac{f}{h^{2}\varphi(1/h)^{2}}\varphi^{'}(1/h)(h-\varepsilon_{0})Qdx\\
&\leq (\max_{h\in I_{(0,\infty)}}|\varphi^{'}(1/h)|)Q(x_{0},t)\frac{\int_{\sn}du}{\left(\int_{\sn}\frac{f}{\varphi(1/h)}dx\right)^{2}}\int_{\sn}\frac{f (h-\varepsilon_{0})}{h^{2}\varphi(1/h)^{2}}dx,
\end{split}
\end{equation}
where $I_{(0,\infty)}$ is a bounded interval depending only on the upper and lower bounds of $h$ on $(0,\infty)$, and $\varphi^{'}(1/h)$ denotes $\frac{\partial \varphi(1/h)}{\partial (1/h)}$, then using the first equality of \eqref{phph} and \eqref{Up1}, we have
\begin{equation}
\begin{split}
\label{Up7}
\frac{\partial\left( \frac{\rho^{n}}{\varphi(1/h)}\right)}{\partial t}&=\frac{n\rho^{n-2}}{\varphi(1/h)}\left(hh_{t}+\sum_{k} h_{k}h_{kt}\right)+\frac{\rho^{n}}{h^{2}\varphi(1/h)^{2}}\varphi^{'}(1/h)\frac{\partial h}{\partial t}\\
&=\frac{n\rho^{n-2}}{\varphi(1/h)}Q(\varepsilon_{0}h-\rho^{2})-\frac{\rho^{n}}{h^{2}\varphi(1/h)^{2}}\varphi^{'}(1/h)(h-\varepsilon_{0})Q\\
&\leq \frac{n\rho^{n-2}}{\varphi(1/h)}Q(\varepsilon_{0}h-\rho^{2})+( \max_{h\in I_{(0,\infty)}} |\varphi^{'}(1/h)|)\frac{\rho^{n}}{h^{2}\varphi(1/h)^{2}}(h-\varepsilon_{0})Q.
\end{split}
\end{equation}
Substituting \eqref{Up5}, \eqref{Up6} and \eqref{Up7} into \eqref{Up4*},  we obtain that at $x_{0}$,
\begin{equation}
\begin{split}
\label{finic2}
\partial_{t}Q
&\leq \frac{f}{h-\varepsilon_{0}}\left[\kappa Q[(n-1)-\varepsilon_{0}(n-1)\kappa^{\frac{1}{n-1}}]\frac{\rho^{n}}{\varphi(1/h)}\frac{\int_{\sn}du}{\int_{\sn}\frac{f}{\varphi(1/h)}dx}\right.\\
&\quad \left.+\frac{\rho^{n}}{\varphi(1/h)}\kappa (\max_{h\in I_{(0,\infty)}}|\varphi^{'}(1/h)|)Q\frac{\int_{\sn}du}{\left(\int_{\sn}\frac{f}{\varphi(1/h)}dx\right)^{2}}\int_{\sn}\frac{f (h-\varepsilon_{0})}{h^{2}\varphi(1/h)^{2}}dx \right.\\
 &\left.\quad +\kappa \left(\frac{n\rho^{n-2}}{\varphi(1/h)}Q(\varepsilon_{0}h-\rho^{2})+(\max_{h\in I_{(0,\infty)}} |\varphi^{'}(1/h)|)\frac{\rho^{n}}{h^{2}\varphi(1/h)^{2}}(h-\varepsilon_{0})Q\right)\frac{\int_{\sn}du}{\int_{\sn}\frac{f}{\varphi(1/h)}dx} \right]
+Q+ Q^{2}.\\
\end{split}
\end{equation}
The a priori estimates in Section \ref{Sec4} and equation \eqref{AF1} allow us to assume $\kappa\approx Q>>1$. Then, using \eqref{finic2}, we conclude
\begin{equation}\label{Up13}
\partial_{t}Q\leq C_{0}Q^{2}(C_{1}-\varepsilon_{0}Q^{\frac{1}{n-1}})<0
\end{equation}
for some $C_{0}$, $C_{1}$ only depending on $\min_{\sn} f$, $\max_{\sn} f$, $||\varphi||_{C^{1}(I_{(0,\infty)})}$, $||h||_{C^{0}(\sn\times (0,\infty))}$, $||\rho||_{C^{0}(\sn\times (0,\infty))}$, $\min_{(\sn\times (0,\infty))} h$ and $\min_{I_{(0,\infty)}} \varphi$. Hence, the ODE \eqref{Up13} tells that
\begin{equation}\label{Qbod}
Q(x_{0},t)\leq C
\end{equation}
for some $C>0$, independent of $t$.

Making use of the priori estimates in Section \ref{Sec4} and Section \ref{Sec5}, \eqref{AF1} and \eqref{Qbod}, for any $(x,t)$, we obtain
\begin{equation}\label{UpFal2}
\kappa=\frac{(h-\varepsilon_{0})Q(x,t)+h}{f(x)\frac{\rho^{n}}{\varphi(1/h)}\frac{\int_{\sn}du}{\int_{\sn}\frac{f}{\varphi(1/h)}dx}}\leq \frac{(h-\varepsilon_{0})Q(x_{0},t)+h}{f(x)\frac{\rho^{n}}{\varphi(1/h)}\frac{\int_{\sn}du}{\int_{\sn}\frac{f}{\varphi(1/h)}dx}}\leq C
\end{equation}
for some $C> 0$, independent of $t$. So, the upper bound of Gauss curvature is established.

We are now in a position to prove the lower bound of \eqref{PUL}. We introduce the auxiliary function
\begin{equation}\label{LFun}
E(x,t)={\rm log}\lambda_{max}(\{w_{ij}\})-d{\rm log} h(x,t)+l|\nabla  h|^{2},
\end{equation}
where $d$ and $l$ are positive constants to be specified later, $\lambda_{max}(\{w_{ij}\})$ is the maximal eigenvalue of $\{w_{ij}\}$. As showed in above, one can know that the eigenvalue of $\{w_{ij}\}$ and $\{w^{ij}\}$ are respectively the principal radii and principal curvature of $\partial \Omega_{t}$.

For any fixed $t\in(0,\infty)$, suppose that the maximum of $E(x,t)$ is attained at $x_{0}$ on ${\sn}$. By a rotation of coordinates, we may assume that $\{w_{ij}(x_{0},t)\}$ is diagonal, and $\lambda_{max}(\{w_{ij}(x_{0},t)\})=w_{11}(x_{0},t)$. To obtain the lower bound of principal curvature, it is necessary to get the upper bound of $w_{11}$.  By means of the above assumption, we transform \eqref{LFun} into
\begin{equation}\label{LFun2}
\widetilde{E}(x,t)={\rm log}w_{11}-d{\rm log} h(x,t)+l|\nabla h|^{2}.
\end{equation}
Utilizing again the above assumption, thus, for any fixed $t\in(0,\infty)$, $\widetilde{E}(x,t)$ has a local maximum at $x_{0}$, which implies that, at $x_{0}$, it yields
\begin{equation}
\begin{split}
\label{gaslw1}
0=\nabla_{i}\widetilde{E}&=w^{11}\nabla_{i}w_{11}-d\frac{h_{i}}{h}+2l \sum_{j} h_{j}h_{ji}\\
&=w^{11}(w_{1i1})-d\frac{h_{i}}{h}+2lh_{i}h_{ii},
\end{split}
\end{equation}
and
\begin{equation}\label{gaslw2}
0\geq \nabla_{ii}\widetilde{E}=w^{11}\nabla_{ii}w_{11}-(w^{11})^{2}(\nabla_{i}w_{11})^{2}-d\left(\frac{h_{ii}}{h}-\frac{h^{2}_{i}}{h^{2}}\right)
+2l\left[\sum_{j} h_{j}h_{jii}+h^{2}_{ii}\right].
\end{equation}
Furthermore, we have
\begin{equation}
\label{gaslw3}
\partial_{t}\widetilde{E}=w^{11}\partial_{t}w_{11}-d\frac{h_{t}}{h}+2l \sum_{j} h_{j}h_{jt}=w^{11}(h_{11t}+h_{t})-d\frac{h_{t}}{h}+2l \sum_{j} h_{j}h_{jt}.
\end{equation}
On the other hand, applying \eqref{yi} and \eqref{hOrflow}, we have
\begin{equation}
\begin{split}
\label{gaslw4}
{\rm log}(h-h_{t})&={\rm log}\left(f\frac{\rho^{n}}{\varphi(1/h)}\kappa \frac{\int_{\sn}du}{\int_{\sn}\frac{f}{\varphi(1/h)}dx}\right)\\
&=-{\rm log}det(\nabla ^{2}h+hI)+\chi(x,t),
\end{split}
\end{equation}
where
\begin{equation}\label{gaslw5}
\chi(x,t):={\rm log}\left(f\frac{\rho^{n}}{\varphi(1/h)} \frac{\int_{\sn}du}{\int_{\sn}\frac{f}{\varphi(1/h)}dx}\right).
\end{equation}
Now, taking the covariant derivative of \eqref{gaslw4} with respect to $e_{j}$, it follows that
\begin{align}\label{gaslw6}
\frac{h_{j}-h_{jt}}{h-h_{t}}&=-\sum_{i,k} w^{ik}\nabla_{j}w_{ik}+\nabla_{j}\chi\notag\\
&=-\sum_{i} w^{ii}(h_{jii}+h_{i}\delta_{ij})+\nabla_{j}\chi,
\end{align}
and
\begin{equation}
\begin{split}
\label{gaslw7}
&\frac{h_{11}-h_{11t}}{h-h_{t}}-\frac{(h_{1}-h_{1t})^{2}}{(h-h_{t})^{2}}\\
&=-\sum_{i}w^{ii}\nabla_{11}w_{ii}+\sum_{i,k} w^{ii}w^{kk}(\nabla_{1}w_{ik})^{2}+\nabla_{11}\chi.
\end{split}
\end{equation}
On the other hand, the Ricci identity on sphere reads
\begin{equation*}
\nabla_{11}w_{ij}=\nabla_{ij}w_{11}-\delta_{ij}w_{11}+\delta_{11}w_{ij}-\delta_{1i}w_{1j}+\delta_{1j}w_{1i}.
\end{equation*}
This together with  \eqref{gaslw2}, \eqref{gaslw3}, \eqref{gaslw6} and \eqref{gaslw7}, by a direct computation (see also \cite{CHZ19}), we have at $x_{0}$,
\begin{equation}
\begin{split}
\label{gaslw8}
\frac{\partial_{t}\widetilde{E}}{h-h_{t}}
&=w^{11}\left[\frac{(h_{11t}-h_{11}+h_{11}+h-h+h_{t})}{h-h_{t}}\right]-d\frac{1}{h}\frac{h_{t}-h+h}{(h-h_{t})}+2l\frac{\sum_{j} h_{j}h_{jt}}{h-h_{t}}\\
&=w^{11}\left[-\frac{(h_{1}-h_{1t})^{2}}{(h-h_{t})^{2}}+\sum_{i} w^{ii}\nabla_{11}w_{ii}-\sum_{i,k} w^{ii}w^{kk}(\nabla_{1}w_{ik})^{2}-\nabla_{11}\chi\right]\\
&\quad +\frac{1}{h-h_{t}}-w^{11}+\frac{d}{h}-\frac{d}{h-h_{t}}+2l\frac{\sum_{j} h_{j}h_{jt}}{h-h_{t}}\\
&\leq w^{11}\left[\sum_{i} w^{ii}(\nabla_{ii}w_{11}-w_{11}+w_{ii})-\sum_{i,k} w^{ii}w^{kk}(\nabla_{1}w_{ik})^{2}\right]\\
&\quad+\frac{1}{h-h_{t}}(1-d)-w^{11}\nabla_{11}\chi+\frac{d}{h}+2l\frac{\sum_{j} h_{j}h_{jt}}{h-h_{t}}\\
&\leq \sum_{i} w^{ii}\left[ (w^{11})^{2}(\nabla_{i}w_{11})^{2}+d\left(\frac{h_{ii}}{h}-\frac{h^{2}_{i}}{h^{2}}\right)-2l\left(\sum_{j} h_{j}h_{jii}+h^{2}_{ii}\right)\right]\\
&\quad-w^{11}\sum_{i,k} w^{ii}w^{kk}(\nabla_{1}w_{ik})^{2}-w^{11}\nabla_{11}\chi+\frac{d}{h}+2l\frac{\sum_{j} h_{j}h_{jt}}{h-h_{t}}+\frac{1}{h-h_{t}}(1-d)\\
&\leq\sum_{i} w^{ii}d\left(\frac{h_{ii}+h-h}{h}-\frac{h^{2}_{i}}{h^{2}} \right)-2l\sum_{i} w^{ii}h^{2}_{ii}+2l\sum_{j} h_{j}\left[-\sum_{i} w^{ii}h_{jii}+\frac{h_{jt}}{h-h_{t}}\right]\\
&\quad
-w^{11}\nabla_{11}\chi+\frac{d}{h}+\frac{1}{h-h_{t}}(1-d)\\
&\leq -d\sum_{i} w^{ii}-2l\sum_{i} w^{ii}(w^{2}_{ii}-2w_{ii}h)-w^{11}\nabla_{11}\chi+\frac{nd}{h}+\frac{1}{h-h_{t}}(1-d)\\
&\quad+2l\sum_{j} h_{j}\left[\frac{h_{j}}{h-h_{t}}+\sum_{i} w^{ii}h_{j}-\nabla_{j}\chi\right]\\
&\leq-w^{11}\nabla_{11}\chi-2l\sum_{j} h_{j}\nabla_{j}\chi +(2l|\nabla h|^{2}-d)\sum_{i} w^{ii}-2l\sum_{i} w_{ii}\\
&\quad +\frac{2l|\nabla h|^{2}+1-d}{h-h_{t}}+4(n-1)hl+\frac{nd}{h}.
\end{split}
\end{equation}
On the other hand, differentiating \eqref{gaslw5}, by the first equality of \eqref{phph}, at $x_{0}$, we obtain
\begin{equation}\label{chi}
\nabla_{j}\chi=\frac{f_{j}}{f}+n\frac{hh_{j}+h_{j}h_{jj}}{\rho^{2}}+\frac{\varphi^{'}(1/h)h_{j}}{h^{2}\varphi(1/h)},
\end{equation}
and
\begin{equation}\label{chi2}
\begin{split}
\nabla_{11}\chi&=\frac{ff_{11}-f^{2}_{1}}{f^{2}}+n\frac{hh_{11}+h^{2}_{1}+h^{2}_{11}+\sum h_{j}h_{j11}}{\rho^{2}}-2n\frac{(hh_{1}+h_{1}h_{11})^{2}}{\rho^{4}}\\
&\quad -2\frac{\varphi^{'}(1/h)h^{2}_{1}}{h^{3}\varphi(1/h)}-\frac{\varphi^{''}(1/h)h^{2}_{1}}{h^{4}\varphi(1/h)}+\frac{\varphi^{'}(1/h)h_{11}}{h^{2}\varphi(1/h)}+\frac{[\varphi^{'}(1/h)]^{2}h^{2}_{1}}{h^{4}\varphi(1/h)^{2}},
\end{split}
\end{equation}
where $\varphi^{''}(1/h)$ denotes $\frac{\partial^{2}\varphi(1/h)}{\partial(1/h)^{2}}$. Using $C^{0}$, $C^{1}$ estimates in Section 4,  \eqref{gaslw1}, \eqref{chi} and \eqref{chi2}, we get
\begin{equation}
\begin{split}
\label{gaslw9*}
&-2l\sum_{j} h_{j}\nabla_{j}\chi-w^{11}\nabla_{11}\chi\\
&=-2l\sum_{j} h_{j}\left[\frac{f_{j}}{f}+n\frac{hh_{j}+h_{j}h_{jj}}{\rho^{2}}+\frac{\varphi^{'}(1/h)h_{j}}{h^{2}\varphi(1/h)}\right]\\
&\quad -w^{11}\left[\frac{ff_{11}-f^{2}_{1}}{f^{2}}+n\frac{hh_{11}+h^{2}_{1}+h^{2}_{11}+\sum h_{j}h_{j11}}{\rho^{2}}-2n\frac{(hh_{1}+h_{1}h_{11})^{2}}{\rho^{4}}\right.\\
&\left.\quad \quad  \quad \quad -2\frac{\varphi^{'}(1/h)h^{2}_{1}}{h^{3}\varphi(1/h)}-\frac{\varphi^{''}(1/h)h^{2}_{1}}{h^{4}\varphi(1/h)}+\frac{\varphi^{'}(1/h)h_{11}}{h^{2}\varphi(1/h)}+\frac{[\varphi^{'}(1/h)]^{2}h^{2}_{1}}{h^{4}\varphi(1/h)^{2}}\right]\\
& \leq C_{1}l+n\sum_{j} \frac{h_{j}}{\rho^{2}}(-2lh_{j}h_{jj}-w^{11}h_{j11})+C_{2}w^{11}+4nw^{11}\frac{hh^{2}_{1}h_{11}}{\rho^{4}}\\
&\quad +w^{11}\left[ n\frac{hh_{11}+h^{2}_{11}}{\rho^{2}}+\frac{2nh^{2}_{1}h^{2}_{11}}{\rho^{4}}+\frac{\varphi^{'}(1/h)h_{11}}{h^{2}\varphi(1/h)}\right]\\
&\leq C_{1}l+n\sum_{j} \frac{h_{j}}{\rho^{2}}\left(w^{11}h_{1}\delta_{j1}-d\frac{h_{j}}{h}\right)+C_{2}w^{11}+4nw^{11}\frac{hh^{2}_{1}(w_{11}-h)}{\rho^{4}}\\
&\quad + w^{11}\left[ n \frac{h(w_{11}-h)+(w_{11}-h)^{2}}{\rho^{2}}+\frac{2nh^{2}_{1}(w_{11}-h)^{2}}{\rho^{4}} + \frac{\varphi^{'}(1/h)(w_{11}-h)}{h^{2}\varphi(1/h)}\right],
\end{split}
\end{equation}
where $C_{1}$ is a positive constant depending on $||f||_{C^{1}(\sn)}$, $||h||_{C^{1}(\sn \times (0,\infty))}$, $||\varphi||_{C^{1}(I_{(0,\infty)})}$, $\min_{(\sn \times (0,\infty))} \rho$, $\min_{(\sn \times (0,\infty))} h$, $\min_{\sn} f$ and $\min_{I_{(0,\infty)}} \varphi$, and $C_{2}$ is a positive constant depending on $||f||_{C^{2}(\sn)}$, $||h||_{C^{1}(\sn \times (0,\infty))}$, $||\varphi||_{C^{2}(I_{(0,\infty)})}$, $\min_{(\sn \times (0,\infty))} \rho$, $\min_{\sn} f$, $\min_{I_{(0,\infty)}} \varphi$ and $\min_{(\sn \times (0,\infty))} h$. To proceed further, \eqref{gaslw9*} reduces to
\begin{equation}
\begin{split}
\label{gaslw9}
&-2l\sum_{j}h_{j}\nabla_{j}\chi-w^{11}\nabla_{11}\chi\\
&\leq C_{3}l+C_{4}w^{11}+C_{5}+n\frac{\rho^{2}+2h^{2}_{1}}{\rho^{4}}w_{11},
\end{split}
\end{equation}
where $C_{3}$, $C_{4}$, $C_{5}$ are positive constants, depending only on  $||f||_{C^{1}(\sn)}$, $||h||_{C^{1}(\sn \times (0,\infty))}$, $||\varphi||_{C^{2}(I_{(0,\infty)})}$, $\min_{(\sn \times (0,\infty))} \rho$, $\min_{(\sn \times (0,\infty))} h$, $\min_{\sn} f$ and $\min_{I_{(0,\infty)}} \varphi$.

 We substitute \eqref{gaslw9} into \eqref{gaslw8}, and choose $d=2l\max_{\sn\times (0,\infty)}|\nabla h|^{2}+1$, with
 \[
 l=n\frac{\max_{\sn \times (0,\infty)}\rho^{2}+2\max_{\sn \times (0,\infty)}|\nabla h|^{2}}{\min_{\sn \times (0,\infty)}\rho^{4}}+1.
 \]
Then, at $x_{0}$, we have
\begin{equation}
\begin{split}
\label{gaslw10}
\frac{\partial_{t}\widetilde{E}}{h-h_{t}}&\leq C_{3}l+C_{4}w^{11}+C_{5}-l\sum_{i} w_{ii}+4(n-1)hl+\frac{nd}{h}<0
\end{split}
\end{equation}
provided $w_{11}>>1$. Hence, \eqref{gaslw10} tells
\begin{equation}\label{EQULL2}
E(x_{0},t)=\widetilde{E}(x_{0},t)\leq C
\end{equation}
for some $C>0$, independent of $t$. The inequality in \eqref{EQULL2} implies that the principal radii is bounded from above. So, we complete the proof of Theorem \ref{PCUL*}.

\end{proof}

\section{Existence of smooth solution to this problem}
\label{Sec6}
In this section, we first focus on obtaining the long time existence and convergence results of flow \eqref{xOrflow}, which completes the proof of Theorem \ref{main*}.

Making use of the uniform estimates in Section 4 and Section 5, this allows us to assert that equation \eqref{xOrflow} is uniformly parabolic in $C^{2}$ norm space. Then, by virtue of the standard Krylov's regularity theory \cite{K87}, we demonstrate the long-time existence and regularity of the solution of equation\eqref{xOrflow}. Furthermore,
\begin{equation}\label{ESM1}
||h||_{C^{i,j}_{x,t}(\sn\times[0,+\infty))}\leq C
\end{equation}
for some $C>0$, independent of $t$, and for each pairs of nonnegative integers $i$ and $j$.

 With the aid of the Arzel\`a-Ascoli theorem and a diagonal argument, there exists a sequence of $t$, denoted by $\{t_{k}\}_{k\in N}\subset (0,\infty)$, and a smooth function $h(x)$ such that
\begin{equation*}
||h(x,t_{k})-h(x)||_{C^{i}({\sn})}\rightarrow 0
\end{equation*}
uniformly for any nonnegative integer $i$ as $t_{k}\rightarrow \infty$. This illustrates that $h(x)$ is a support function. Let $\Omega$ be the convex body determined by $h(x)$.  We conclude that $\Omega$ is smooth, origin symmetric and strictly convex.

We are now in position to prove the solvability of the following equation
\begin{equation*}
\gamma \varphi(1/h)h(|\nabla h|^{2}+h^{2})^{-\frac{n}{2}}det(\nabla^{2}h+hI)=f
\end{equation*}
for some positive constant $\gamma$.

On the one hand, if the condition $(i)$ holds, Lemma \ref{non-decre} tells that, for any $t>0$, we have
\begin{equation}\label{der1}
P^{'}(t)\geq 0.
\end{equation}
 In view of the $C^{0}$ estimate on $h$ showed in Lemma \ref{C0},  for any $t\geq 0$, there exists a positive constant $C$ which is independent of $t$, such that
\begin{equation}\label{der2}
P(t)\leq C.
\end{equation}
Combining \eqref{der1} and \eqref{der2}, we obtain
\begin{equation*}
\int_{0}^{t}P^{'}(s)d s=P(t)-P(0)\leq P(t)\leq C,
\end{equation*}
which implies
\begin{equation*}
\int_{0}^{\infty}P^{'}(t)dt\leq C.
\end{equation*}
This reveals that there exists a sequence of $t_{k}\rightarrow \infty$ such that
\begin{equation}\label{pe}
P^{'}(t_{k})\rightarrow 0 \ as \ t_{k}\rightarrow \infty.
\end{equation}
On the other hand, recall \eqref{derpt}, it yields
\begin{equation}
\begin{split}
\label{limt}
&P^{'}(t_{k})\int_{\sn}\frac{f}{\varphi(1/h))}dx\Bigg|_{t=t_{k}}\\
&=-\left(\int_{\sn}\sqrt{\frac{f^{2}\rho^{n}\kappa}{h\varphi(1/h)^{2}}}\sqrt{\frac{h}{\kappa \rho^{n}}}dx\right)^{2}\Bigg|_{t=t_{k}}+\left(\int_{\sn}\frac{f^{2}\rho^{n}\kappa}{h \varphi(1/h)^{2}}dx\int_{\sn}\frac{h}{\kappa \rho^{n}}dx\right)\Bigg|_{t=t_{k}}.
\end{split}
\end{equation}
Taking the limit in \eqref{limt}, using \eqref{pe}, we have
\begin{equation}\label{pcon}
\left(\int_{\sn}\sqrt{\frac{f^{2}\rho^{n}\kappa}{h\varphi(1/h)^{2}}}\sqrt{\frac{h}{\kappa \rho^{n}}}dx\right)^{2}=\int_{\sn}\frac{f^{2}\rho^{n}\kappa}{h \varphi(1/h)^{2}}dx\int_{\sn}\frac{h}{\kappa \rho^{n}}dx.
\end{equation}
In light of \eqref{derpt} and \eqref{pcon}, by the equality condition of the H\"{o}lder inequality, and using the a priori estimates in Section \ref{Sec4} and Section \ref{Sec5}, we conclude that there exists a positive constant $\gamma$ such that
\[
\frac{f^{2}\rho^{n}\kappa}{h\varphi(1/h)^{2}}=\gamma^{2} \frac{h}{\kappa \rho^{n}},
\]
which implies that
\[
\gamma \varphi(1/h)h (|\nabla h|^{2}+h^{2})^{-\frac{n}{2}}det(\nabla^{2}h+hI)=f
\]
has a solution.

On the other hand, if the condition $(ii)$ holds, Lemma \ref{non-incre} tells that, for any $t>0$,
\begin{equation*}
P^{'}(t)\leq 0,
\end{equation*}
and
\begin{equation*}
\int_{0}^{t}(-P^{'}(s))ds=P(0)-P(t)\leq P(0).
\end{equation*}
This gives
\begin{equation}\label{ptoi}
\int_{0}^{\infty}(-P^{'}(t))dt\leq P(0).
\end{equation}
Equation \eqref{ptoi} tells us that there exists a sequence of $t_{k}\rightarrow \infty$ such that
\begin{equation*}
-P^{'}(t_{k})\rightarrow 0 \ as \ t_{k}\rightarrow \infty.
\end{equation*}
Following the similar line as above, we conclude that
\[
\gamma \varphi(1/h)h (|\nabla h|^{2}+h^{2})^{-\frac{n}{2}}det(\nabla^{2}h+hI)=f
\]
has a solution for some positive constant $\gamma$. Hence, we complete the proof of Theorem \ref{main*}.

As presented in the introduction, for  general $\varphi$, there may be no uniqueness of solutions to  equation \eqref{Or-Mong}. Here, following the similar lines as in \cite{CW06,LY20}, we shall give a special uniqueness result of solutions to equation \eqref{Or-Mong} based on mild monotonicity assumption on $\varphi$ in the case of $\gamma=1$ . It is shown in the following.
\begin{theo}\label{unique}
Under the assumptions of Theorem \ref{main*}, assume moreover that $\varphi$ is increasing on $(0,\infty)$. Then the solutions to equation
\begin{equation}\label{uneq}
 \varphi(1/h)h (|\nabla h|^{2}+h^{2})^{-\frac{n}{2}}det(\nabla^{2}h+hI)=f
\end{equation}
is unique.
\end{theo}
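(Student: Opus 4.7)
The plan is to prove uniqueness by a ratio-maximum-principle argument; this works cleanly here because \eqref{uneq} has a very specific scaling structure in $h$. Let $h_1$ and $h_2$ be two smooth, origin-symmetric, strictly convex solutions of \eqref{uneq}, and set $v=h_1/h_2$ on $\sn$. Since $\sn$ is compact and $h_2>0$, the ratio $v$ attains its maximum $M$ at some $x_0\in\sn$. The goal is to show $M\le 1$; the symmetric argument with $h_1$ and $h_2$ swapped will then give $M\ge 1$, forcing $h_1\equiv h_2$.

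At $x_0$, the first-order condition $\nabla v=0$ gives $\nabla h_1=M\nabla h_2$, hence $|\nabla h_1|^{2}+h_1^{2}=M^{2}(|\nabla h_2|^{2}+h_2^{2})$. The second-order condition $\nabla^{2} v\le 0$, combined with $h_1=Mh_2$ at $x_0$, yields
\[
\nabla^{2} h_1+h_1 I\le M\bigl(\nabla^{2} h_2+h_2 I\bigr)
\]
as positive definite $(n-1)\times(n-1)$ symmetric matrices on $T_{x_0}\sn$. Taking determinants gives $\det(\nabla^{2} h_1+h_1 I)\le M^{n-1}\det(\nabla^{2} h_2+h_2 I)$. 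I would substitute these three pointwise estimates into the left-hand side of \eqref{uneq} for $h_1$; the key observation is that the $M$-factors telescope as
\[
(Mh_2)\cdot (M^{2})^{-n/2}\cdot M^{n-1}=M\cdot M^{-n}\cdot M^{n-1}=1,
\]
so that at $x_0$
\[
f(x_0)\le \varphi\!\bigl(1/h_1(x_0)\bigr)\, h_2\,(|\nabla h_2|^{2}+h_2^{2})^{-n/2}\det(\nabla^{2} h_2+h_2 I)=\frac{\varphi(1/h_1(x_0))}{\varphi(1/h_2(x_0))}\,f(x_0),
\]
where the last equality uses \eqref{uneq} for $h_2$. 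Canceling $f(x_0)>0$ yields $\varphi(1/h_2(x_0))\le \varphi(1/h_1(x_0))$.

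The final step is to extract a contradiction from monotonicity. If $M>1$, then $h_1(x_0)>h_2(x_0)$, so $1/h_1(x_0)<1/h_2(x_0)$, and the hypothesis that $\varphi$ is (strictly) increasing forces $\varphi(1/h_1(x_0))<\varphi(1/h_2(x_0))$, contradicting the inequality above. Hence $M\le 1$; applying the same argument to $\max_{\sn}(h_2/h_1)$ gives $h_1\ge h_2$, so $h_1\equiv h_2$. The only real content of the proof is the exact cancellation of the powers of $M$, which is dictated by the precise homogeneity degrees appearing in \eqref{uneq}; the main point to watch is the strictness of monotonicity of $\varphi$, without which the concluding strict inequality could fail.
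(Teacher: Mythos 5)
Your proof is correct and follows essentially the same ratio-maximum-principle argument as the paper: the paper works with $G=\log(h_1/h_2)$ and cancels the homogeneity factors by pulling out powers of $h_i$ in each term, whereas you work directly with $v=h_1/h_2$ and track the powers of $M$, but the two calculations are identical in content and both arrive at $1\ge\varphi(1/h_2(x_0))/\varphi(1/h_1(x_0))$ at the maximum point. Your closing remark about strictness of the monotonicity of $\varphi$ is apt; the paper implicitly reads ``increasing'' as strict, which is exactly what is needed to turn the inequality into the contradiction $h_1(x_0)\le h_2(x_0)$.
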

\begin{proof}
Assume $h_{1}$ and $h_{2}$ be two solutions of equation \eqref{uneq}. To prove $h_{1}=h_{2}$, on the one  hand, we take by contradiction and assume that $\max_{\sn} \frac{h_{1}}{h_{2}}>1$. Suppose $\frac{h_{1}}{h_{2}}$ achieves its maximum at $x_{0}\in\sn$. It follows $h_{1}(x_{0})>h_{2}(x_{0})$. Let $G={\rm log}\frac{h_{1}}{h_{2}}$. So, at $x_{0}$, one has
\begin{equation}\label{fide}
0=\nabla G=\frac{\nabla h_{1}}{h_{1}}-\frac{\nabla h_{2}}{h_{2}},
\end{equation}
and applying \eqref{fide}, we deduce
\begin{equation}
\begin{split}
\label{sede}
0&\geq \nabla^{2}G\\
&=\frac{\nabla^{2}h_{1}}{h_{1}}-\frac{\nabla h_{1}\otimes \nabla h_{1} }{h^{2}_{1}}-\frac{\nabla^{2}h_{2}}{h_{2}}+\frac{\nabla h_{2}\otimes \nabla h_{2} }{h^{2}_{2}}\\
&=\frac{\nabla^{2}h_{1}}{h_{1}}-\frac{\nabla^{2}h_{2}}{h_{2}}.
\end{split}
\end{equation}
Since $h_{1}$ and $h_{2}$ are solutions of equation \eqref{uneq}, using \eqref{uneq} and \eqref{sede}, at $x_{0}$,  we obtain
\begin{equation}
\begin{split}
\label{h1h2}
1&=\frac{\varphi(1/h_{2})h_{2}(|\nabla h_{2}|^{2}+h^{2}_{2})^{-\frac{n}{2}}det(\nabla^{2}h_{2}+h_{2}I)}{\varphi(1/h_{1})h_{1}(|\nabla h_{1}|^{2}+h^{2}_{1})^{-\frac{n}{2}}det(\nabla^{2}h_{1}+h_{1}I)}\\
&=\frac{\varphi(1/h_{2})h_{2}\left[h^{-n}_{2}\left(|\frac{\nabla h_{2}}{h_{2}}|^{2}+1\right)^{-\frac{n}{2}}\right]h^{n-1}_{2}det\left(\frac{\nabla^{2}h_{2}}{h_{2}}+I\right)}{\varphi(1/h_{1})h_{1}\left[h^{-n}_{1}\left(|\frac{\nabla h_{1}}{h_{1}}|^{2}+1\right)^{-\frac{n}{2}}\right]h^{n-1}_{1}det\left(\frac{\nabla^{2}h_{1}}{h_{1}}+I\right)}\\
&\geq \frac{\varphi(1/h_{2})}{\varphi(1/h_{1})}.
\end{split}
\end{equation}
In light of the assumption in Theorem \ref{unique}, together with \eqref{h1h2}, it implies that $h_{1}(x_{0})\leq h_{2}(x_{0})$, which is a contradiction. This reveals
\begin{equation}\label{h1xiao}
\max_{\sn} \frac{h_{1}}{h_{2}}\leq 1.
\end{equation}
On the other hand, interchanging the role of $h_{1}$ and $h_{2}$, applying the same argument as above, we have
\begin{equation}\label{h2xiao}
\max_{\sn}\frac{h_{2}}{h_{1}}\leq 1.
\end{equation}
Combining \eqref{h1xiao} and \eqref{h2xiao}, this illustrates that $h_{1}=h_{2}$. So, we complete the proof.

\end{proof}

\section*{Acknowledgment}The authors are grateful to their supervisor Prof. Yong Huang for his constant guidance and encouragement.

\end{document}